\let\mathcal\mathscr
\title{\sc On quartic double fivefolds and the matrix factorizations of exceptional quaternionic representations}
\author{\sc Roland Abuaf \footnote{E-mail :\textit{rabuaf@gmail.com}.}}
\let\mathcal\mathscr
\newtheorem{theo}{Theorem}[subsection]
\newtheorem{theo*}{Theorem}
\newtheorem{rem}[theo]{Remark}
\newtheorem{prop}[theo]{Proposition}
\newtheorem{prop*}{Proposition}
\newtheorem{prob*}{Problem}
\newtheorem{quest}[theo]{Question}
\newtheorem{defi}[theo]{Definition}
\newtheorem{lem}[theo]{Lemma}
\newtheorem{cor}[theo]{Corollary}
\def\DB{\mathrm{D^{b}}}
\def\OO{\mathcal{O}}
\def\R0{\mathrm{R^{0}}}
\def\HHH{\mathrm{Hom}}
\def\OO{\mathcal{O}}
\def\ad{\mathrm{ad}}
\def\C{\mathcal{C}}
\def\sQ{\mathcal{Q}_{\mathrm{Spin}_{12}}}
\def\sP{P_{\mathrm{Spin}_{12}}}
\def\lQ{\mathcal{Q}_{\mathrm{SL}_{6}}}
\def\lP{P_{\mathrm{SL}_{6}}}
\def\F{\mathcal{F}}
\def\E{\mathcal{E}}
\def\GG{\mathrm{G}}
\def\G{\mathcal{G}}
\def\Q{\mathcal{Q}}
\def\T{\mathcal{T}}
\def\A{\mathcal{A}}
\def\Sp{\mathrm{Spin}}
\def\X{\mathcal{X}}
\DeclareMathOperator{\Pff}{Pff}
\newcommand{\eq}[1][r]
{\ar@<-3pt>@{-}[#1]
\ar@<-1pt>@{}[#1]|<{}="gauche"
\ar@<+0pt>@{}[#1]|-{}="milieu"
\ar@<+1pt>@{}[#1]|>{}="droite"
\ar@/^2pt/@{-}"gauche";"milieu"
\ar@/_2pt/@{-}"milieu";"droite"}
\newcommand{\incl}[1][r]
  {\ar@<-0.2pc>@{^(-}[#1] \ar@<+0.2pc>@{-}[#1]}
\begin{document}

\maketitle

\begin{abstract}
We study quartic double fivefolds from the perspective of Fano manifolds of Calabi-Yau type and that of exceptional quaternionic representations. We first prove that the generic quartic double fivefold can be represented, in a finite number of ways, as a double cover of $\mathbb{P}^5$ ramified along a linear section of the $\mathrm{Spin}_{12}$-invariant quartic in $\mathbb{P}^{31}$. Then, using the geometry of the Vinberg's type $\mathrm{II}$ decomposition of some exceptional quaternionic representations, and backed by some cohomological computations performed by Macaulay$2$, we prove the existence of a spherical rank $6$ vector bundle on such a generic quartic double fivefold. We finally use the existence this vector bundle to prove that the homological unit of the CY-$3$ category associated by Kuznetsov to the derived category of a generic quartic double fivefold is $\mathbb{C} \oplus \mathbb{C}[3]$.
\end{abstract}

\vspace{\stretch{1}}

\newpage

\tableofcontents

\begin{section}{Introduction}
\subsection{Manifolds of Calabi-Yau type}

Manifolds of Calabi-Yau type were defined by Iliev and Manivel \cite{maniliev2} as compact complex manifolds of odd dimension whose middle dimensional Hodge structure is similar to that of a Calabi-Yau threefold. More precisely, following \cite{maniliev2}:

\begin{defi}\label{defmcyt}
Let $X$ be a smooth complex compact algebraic variety of odd dimension $2n+1$, for $n \geq 1$. We say that $X$ is of Calabi-Yau type if the following three conditions hold:
\begin{enumerate}
\item The middle dimensional Hodge structure of $X$ is numerically similar to that of Calabi-Yau threefold, that is:
\begin{equation*}
\begin{split}
h^{n+2,,n-1}(X) = 1, & \hspace{0.5cm} \textrm{and} \hspace{0.5cm} h^{n+p+1,n-p}(X) = 0, \,\, \textrm{for} \,\, p \geq 2.
\end{split}
\end{equation*}
 \item For any generator $\tau \in H^{n+2,n-1}(X)$, the contraction map:
 \begin{equation*}
 H^1(X,T_X) \stackrel{\cap \, \tau}\longrightarrow H^{n-1}(X,\Omega^{n+1}(X)
 \end{equation*}
 is an isomorphism.
 
\item The Hodge numbers $h^{k,0}(X)$ vanish, for all $1 \leq k \leq 2n$.
\end{enumerate}

\begin{rem} \label{remfonda}
By Serre duality, a smooth threefold with trivial canonical bundle automatically satisfies condition $2$ in the above definition. On the other hand, for smooth manifolds of dimension bigger than four, it seems highly non-trivial to check this condition.  
\end{rem}

\end{defi}

Potential examples of Fano manifolds of Calabi-Yau type (namely the cubic sevenfold and the quartic double fivefold) appeared some time ago in the Physics literature (see \cite{CDP, Sch, BBVW}). They were used to describe the mirrors of some rigid Calabi-Yau threefolds obtained as crepant resolutions of product of elliptic curves divided by some well-chosen finite groups (we refer to \cite{CDP, Sch, BBVW} for more details). These examples have been put into a more systematic mathematical treatment in \cite{maniliev2}. Physicists were however not too far from exhausting all possible examples of complete intersections in (weighted) projective spaces which should be of Calabi-Yau type. Indeed, an inspection of Hodge numbers for smooth complete intersections in weighted projective spaces reveals the following:

\begin{prop}[section 3.1 of \cite{maniliev2}] \label{prop1} Let $X$ be a smooth complete intersection of Calabi-Yau type in a weighted projective space. Assume that $\dim X \geq 4$, then $X$ is necessarily one of the following:

\begin{enumerate}
\item a smooth cubic sevenfold in $\mathbb{P}^8$,
\item a smooth quartic double fivefold in $\mathbb{P}(1,1,1,1,1,1,2)$,
\item a transverse intersection of a smooth cubic and a smooth quadric in $\mathbb{P}^7$.
\end{enumerate}
\end{prop}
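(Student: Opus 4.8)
The plan is to follow the Hodge‑theoretic census of \cite{maniliev2}. Since Definition~\ref{defmcyt} is used only through its \emph{necessary} consequences, it suffices to enumerate the smooth (hence quasi‑smooth, well‑formed, non‑conical) complete intersections $X=X_{d_1,\dots,d_c}\subset\mathbb{P}(a_0,\dots,a_N)$ with $\dim X=N-c=2n+1\geq 5$, so $n\geq 2$, that satisfy condition (1): condition (3) is automatic and condition (2) can only shorten the list. First I would record two standard facts about such an $X$: it has the rational cohomology of $\mathbb{P}^{2n+1}$ away from the middle degree (whence condition (3), and $H^{2n+1}(X)$ is entirely primitive), and by the Griffiths--Steenbrink--Dolgachev description its middle primitive Hodge numbers are the dimensions of explicit graded pieces of an Artinian Gorenstein ring built from $(a_i),(d_j)$, whose Hilbert series is a product of cyclotomic‑type factors. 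In the hypersurface case $X=\{f=0\}\subset\mathbb{P}(a_0,\dots,a_{2n+2})$ of degree $d$, with $\sigma=\sum_i a_i$, this reads
\begin{equation*}
h^{2n+1-q,\,q}_{\mathrm{prim}}(X)\;=\;\left[t^{(q+1)d-\sigma}\right]\ \prod_{i=0}^{2n+2}\frac{1-t^{d-a_i}}{1-t^{a_i}},
\end{equation*}
the bracketed object being a polynomial with non‑negative, palindromic coefficients and constant term $1$.

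Next I would translate condition (1). Applied with $p=n$ it gives $h^{2n+1,0}(X)=h^0(X,\omega_X)=0$, so $X$ is Fano. For a hypersurface, taking $q=n-1$ then forces $h^{n+2,n-1}_{\mathrm{prim}}=\left[t^{nd-\sigma}\right](\cdots)=1$, while $q=n-p$ with $p\geq 2$ forces $\left[t^{(n-p+1)d-\sigma}\right](\cdots)=0$; by the palindromy and positivity just noted, the value $1$ can only be attained in degree $0$ (the top degree leads to a contradiction), so $nd-\sigma=0$, and the required vanishings then hold automatically since they sit in negative degree. Thus a CY‑type hypersurface is exactly a Fano one with $\sum_i a_i=n\,d$, and the analogous --- somewhat longer --- generating‑function identity for complete intersections of codimension $\geq 2$ similarly collapses condition (1) to a single linear relation among the $a_i$ and the $d_j$.

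The enumeration becomes finite once genuine smoothness (not just quasi‑smoothness) is used: $X$ smooth forces $X\cap\operatorname{Sing}\mathbb{P}(a)=\emptyset$, which for a hypersurface makes the weights $>1$ pairwise coprime and each a proper divisor of $d$ (otherwise $X$ meets a positive‑dimensional singular stratum or passes through a coordinate quotient singularity), while discarding linear cones lets me take $a_i<d$. Plugging $\sum_i a_i=nd$, $2n+3$ weights with $1\leq a_i<d$, and the coprimality/divisibility constraints into the obvious estimate $\sum_i a_i\leq (\#\{1\text{'s}\})+\prod_{a_i>1}a_i\leq 2n+3-s+d$ yields $n(d-2)=3$ if all weights equal $1$, and $n(d-2)\leq d+2$ otherwise, so only finitely many pairs $(n,d)$ and weight vectors arise; a direct check of these leaves exactly the cubic sevenfold in $\mathbb{P}^8$ and the quartic double fivefold in $\mathbb{P}(1,1,1,1,1,1,2)$. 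Running the parallel computation in codimension $\geq 2$ --- where the smoothness analysis still constrains the weights (now also using that several equations can miss a singular stratum) and the linear relation, together with $d_j\geq 2$ and $\dim X\geq 4$, bounds the multidegree --- singles out only the transverse intersection of a quadric and a cubic in $\mathbb{P}^7$, completing the list.

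The main obstacle is not any single deep step but making the enumeration provably exhaustive. One needs the correct primitive‑Hodge‑number formula for weighted complete intersections of arbitrary codimension in order to read off the right linear relation, and one must dovetail it with the elementary but fiddly classification of which weighted projective spaces carry a smooth complete intersection of the prescribed degrees --- in particular correctly discarding the ``quasi‑smooth but not smooth'' candidates that satisfy the numerical identity yet acquire orbifold points from $\operatorname{Sing}\mathbb{P}(a)$, such as the cubic ninefold in $\mathbb{P}(1^{10},2)$. The a priori bounds of the previous paragraph are exactly what reduce the remainder to a finite, mechanical verification.
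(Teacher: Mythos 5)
The paper does not actually prove Proposition~\ref{prop1}; it imports it verbatim from Iliev--Manivel (section~3.1 of \cite{maniliev2}), so there is no in-paper argument to compare against. Your reconstruction follows the route one would expect the cited reference to take: the Griffiths--Steenbrink Jacobian-ring description of the primitive middle Hodge numbers of a (well-formed, quasi-smooth) weighted complete intersection, the reduction of condition~(1) to a single linear relation $\sigma = nd$ (and its codimension-$\geq 2$ analogue), and the finite enumeration driven by genuine-smoothness constraints on the weights.

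One step deserves tightening. You assert that in $H(t)=\prod_i\dfrac{1-t^{d-a_i}}{1-t^{a_i}}$ the coefficient $1$ can only occur in degree $0$, the top degree being dismissed. Palindromy and non-negativity alone do not yield this: e.g.\ $1+2t+t^2+2t^3+t^4$ is palindromic with non-negative coefficients and takes the value $1$ in the middle. What you actually need is (a) unimodality of $H$, which does hold here since $H$ is a product of symmetric unimodal factors; (b) the observation that the socle degree $(2n+3)d-2\sigma$ being negative rules out $nd-\sigma$ equal to the top degree, as you say; and (c) the $p=2$ vanishing $[t^{(n-1)d-\sigma}]H=0$, which forces $(n-1)d-\sigma<0$, so $0\leq nd-\sigma<d$. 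Even then unimodality alone permits a plateau of $1$'s at low degrees, so you still must argue that $\dim R_1>1$ — which follows from the smoothness constraints forcing several weights equal to $1$ and none equal to $d-1$ — before concluding $nd=\sigma$. The complete-intersection branch is stated as ``a parallel computation'' rather than carried out; that is the other place where a reader would want the generating-function formula and the bounding actually written down. With these two points supplied, the argument is sound and the enumerated list matches the statement.
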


\begin{rem}
We do not assert that all examples appearing in the above proposition are of Calabi-Yau type. Indeed, as mentioned in remark \ref{remfonda}, it is non-trivial to check that condition $2$ in definition \ref{defmcyt} holds for these examples. As far as generic cubic sevenfolds and generic quartic double fivefolds are concerned, Iliev and Manivel proved that they are of Calabi-Yau type if and only if they can be represented, in a finite number of ways, as a linear sections of the projective dual of a certain homogeneous space associated to them (see \cite{maniliev2}, section $4.4$, proposition $4.5$ and remarks thereafter). For the cubic sevenfold, this finite representation property is proved in \cite{maniliev}. The quartic double fivefold will be dealt with in the present paper.

The case of the transverse intersection of a smooth cubic and a smooth quadric in $\mathbb{P}^7$ is a bit more mysterious. Indeed, it is not immediately clear what finite representation statement should be equivalent to the fact that it is of Calabi-Yau type. That would certainly be a very interesting example to explore in more details.
\end{rem}

The manifolds exhibited in proposition \ref{prop1} have a lot in common with the archetypal Calabi-Yau threefold : the quintic threefold. We will enumerate some interesting geometric and cohomological properties of the quintic threefold which have been (even partially) shown to be true for the above manifolds. 

\begin{enumerate}

\item Clemens proved that the Griffiths group of smooth quintic in $\mathbb{P}^4$ is not finitely generated \cite{Clemens}. Favero, Iliev and Katzarkov defined a notion of Griffiths group for the manifolds which appear in proposition \ref{prop1}. Using some earlier work of Voisin \cite{voisin}, they showed that the Griffiths group of these manifolds have an infinitely generated Griffiths group \cite{FIK}. We refer to the earlier work of Albano and Collino for the case of the cubic sevenfold \cite{Collino}.

\item Beauville proved that a generic quintic in $\mathbb{P}^4$ has a finite number of determinantal representations \cite{beauville}. In \cite{maniliev}, Iliev and Manivel generalized Beauville's result in the case of cubic sevenfold : a generic cubic in $\mathbb{P}^8$ can be represented, in a finite number of ways, as a linear sections of the $E_6$-invariant cubic hypersurface in $\mathbb{P}^{26}$, the Cartan cubic.

\item Any line bundle on the quintic threefold is \textit{spherical} (i.e. its Ext-algebra is isomorphic to $\mathbb{C} \oplus \mathbb{C}[3]$) and spherical objects provide non-trivial auto-equivalences of the derived category of the quintic threefold (see \cite{ST}). Kuznetsov proved that the derived categories of the manifolds appearing in proposition \ref{prop1} always contain a semi-orthogonal component which is a Calabi-Yau category of dimension $3$ \cite{kuz1}. Furthermore, Iliev and Manivel exhibited examples of spherical vector bundles contained in the CY-$3$ category associated to the derived category of a generic cubic sevenfold \cite{maniliev}.

\item Generic quintinc threefolds are endowed with a so-called Yukawa coupling which satisfies very interesting equations (see \cite{morrison} for instance). It is explained in \cite{CDP} that similar properties hold for the Yukawa coupling constructed on generic cubic sevenfolds. \cite{maniliev2} ask if this could be true for any manifold of Calabi-Yau type.

\end{enumerate}
Obviously, we do not claim that this enumeration is exhaustive in any sense. In fact, this is quite the opposite : we hope that many other remarkable features of the quintic threefold will be shared by the complete intersection manifolds of Calabi-Yau type.

\subsection{Generic quartic double fivefolds}

In this paper, we will focus on quartic double fivefolds. Any such manifold is the zero locus of a weighted homogeneous polynomial of the form $f_4(z_1,\ldots,z_6) + x^2$ in $\mathbb{P}(1,1,1,1,1,1,2)$ where $f_4$ is an element of $S^4 \mathbb{C}^6$. Our main results are the following:

\begin{theo*}[see Theorem \ref{section}] \label{theo1}
The generic quartic double fivefold can be represented, in a finite number of ways, as a double cover of $\mathbb{P}^5$ ramified along a linear section of the $\mathrm{Spin}_{12}$-invariant quartic $\sQ \subset \mathbb{P}^{31}$ (the Igusa quartic). As a consequence, the generic quartic double fivefold is a manifold of Calabi-Yau type.
\end{theo*}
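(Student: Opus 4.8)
The plan is to establish the finite representation statement via an orbit-counting / monodromy argument relating quartics $f_4 \in S^4\mathbb{C}^6$ to the $\mathrm{Spin}_{12}$-invariant quartic $\sQ \subset \mathbb{P}^{31} = \mathbb{P}(\Delta)$, where $\Delta$ is a half-spinor representation of $\mathrm{Spin}_{12}$ of dimension $32$, and then to deduce the Calabi-Yau type property by invoking the criterion of Iliev--Manivel recalled in the excerpt. A linear section of $\sQ$ by a generic $\mathbb{P}^5 \subset \mathbb{P}^{31}$ is a quartic fourfold $Q \cap \mathbb{P}^5 \subset \mathbb{P}^5$; its defining quartic form lies in $S^4 (W^*)$ for $W = \mathbb{C}^6$, so restriction of the Igusa quartic gives a rational map from the Grassmannian $\mathrm{Gr}(6,\Delta)$ (or rather from an open subset of $6$-dimensional linear subspaces) to $\mathbb{P}(S^4 \mathbb{C}^6)$. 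The key point will be to show that this map is \emph{dominant} and \emph{generically finite}: dominance says that a generic quartic fivefold double cover arises this way, and generic finiteness says it does so in only finitely many ways (up to the relevant symmetry).

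**First I would** set up the dimension count. One has $\dim \mathrm{Gr}(6,32) = 6 \cdot 26 = 156$, and $\dim \mathbb{P}(S^4 \mathbb{C}^6) = \binom{9}{4} - 1 = 125$. The group $\mathrm{Spin}_{12}$ has dimension $66$ and acts on $\mathrm{Gr}(6,\Delta)$; the generic stabilizer of a $6$-plane is expected to be finite (or at worst of dimension $d_0$), so the image has dimension at most $156 - 66 = 90 < 125$ — which is \emph{too small}. Hence the naive restriction map is \emph{not} dominant, and the correct statement must account for the additional $\mathrm{PGL}_6$ action on the target: two quartics $f_4, f_4'$ give isomorphic double fivefolds iff they differ by $\mathrm{PGL}_6 = \mathrm{PGL}(W)$, acting with $\dim = 35$. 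So the relevant target is the moduli quotient $\mathbb{P}(S^4 \mathbb{C}^6)/\!/\mathrm{PGL}_6$ of dimension $125 - 35 = 90$, and on the source side one should quotient $\mathrm{Gr}(6,\Delta)$ by $\mathrm{Spin}_{12}$ — no, rather one keeps track of the $\mathrm{GL}_6 \times \mathrm{Spin}_{12}$-action. The cleanest formulation: the Igusa quartic is also $\mathrm{GL}_6$-related to the structure via the Vinberg $\mathbb{Z}/2$-grading of $\mathfrak{e}_7$ (or the relevant exceptional Lie algebra whose $\theta$-group is $\mathrm{Spin}_{12} \times \mathrm{SL}_2$ acting on $\Delta \otimes \mathbb{C}^2$), so the restriction map descends to a map of $90$-dimensional varieties, and I would prove it is dominant by exhibiting one point where the differential is surjective (a single explicit transverse linear section suffices), then conclude generic finiteness automatically from equidimensionality plus properness of the relevant fibers.

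**The hard part will be** the dominance/transversality computation: showing that for \emph{some} (hence generic) $6$-plane $L \subset \Delta$, the restricted quartic $\sQ|_L$ is a ``sufficiently general'' point of moduli, equivalently that the tangent map to the restriction morphism is surjective at $L$. This is exactly the kind of statement the abstract says is ``backed by cohomological computations performed by Macaulay2'': I would reduce it to a rank computation of an explicit $90 \times 90$ (or $125 \times 156$, modulo the group actions) matrix of partial derivatives of the Igusa quartic's coefficients in local coordinates on $\mathrm{Gr}(6,32)$, and verify nondegeneracy by computer algebra over a finite field. Once dominance and generic finiteness are in hand, the second sentence of the theorem is immediate: Iliev--Manivel (\cite{maniliev2}, section $4.4$, proposition $4.5$) show that a generic quartic double fivefold is of Calabi-Yau type \emph{if and only if} it admits such a finite family of representations as a linear section of the dual variety of the relevant homogeneous space — and the $\mathrm{Spin}_{12}$-invariant quartic $\sQ$ is precisely the projective dual of the spinor variety $\mathbb{S}_{12} \subset \mathbb{P}^{31}$ — so the finiteness just proved yields the Calabi-Yau type conclusion directly. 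A secondary obstacle is bookkeeping the symmetry groups correctly so that ``finite number of ways'' is the honest count of $\mathrm{Spin}_{12}$-orbits of $6$-planes lying over a fixed $\mathrm{PGL}_6$-orbit of quartics; I would handle this by a clean statement in terms of the generic fiber of the induced map on GIT quotients.
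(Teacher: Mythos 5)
Your proposal is correct and matches the paper's approach. The paper lifts to the polynomial map $\phi\colon M_{6\times\Delta}\to S^4\mathbb{C}^6$ and verifies with Macaulay2 that $d\phi$ is surjective at a random point --- concretely, that the $192$ forms $z_j\cdot\partial P_{\mathrm{Spin}_{12}}/\partial x_i$ span all of the $126$-dimensional space $S^4\mathbb{C}^6$ --- which gives generic \'etaleness of the descended map between the two $90$-dimensional GIT quotients, and then invokes exactly the Iliev--Manivel criterion you cite for the Calabi--Yau type conclusion.
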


\begin{theo*}[see Theorem \ref{mainmain}] \label{theo2}
The $3$ dimensional Calabi-Yau category associated to the derived category of the generic quartic double fivefold contains a rank $6$ spherical vector bundle.
\end{theo*}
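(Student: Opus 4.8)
The plan is to produce the spherical bundle by descent from the ambient homogeneous geometry of the $\mathrm{Spin}_{12}$-representation and then transport it into Kuznetsov's CY-$3$ category. By Theorem~\ref{section} the generic quartic double fivefold $X$ is a double cover $\pi\colon X \to \mathbb{P}^5$ ramified along $\sQ \cap \mathbb{P}^5$, where $\sQ \subset \mathbb{P}^{31}$ is the Igusa quartic, the projective dual of the spinor variety $\mathbb{S}_{12} \subset \mathbb{P}^{31}$. The first step is to exploit the Vinberg type $\mathrm{II}$ grading of an exceptional quaternionic representation whose ``middle'' piece is the $32$-dimensional half-spin representation of $\mathrm{Spin}_{12}$: on the resolution of singularities of $\sQ$ (the total space of a bundle over $\mathbb{S}_{12}$, or a suitable incidence correspondence), one has a tautological homogeneous vector bundle of rank $6$ arising from the standard representation of $\mathrm{SL}_6 \subset \mathrm{Spin}_{12}$, or equivalently from the matrix-factorization structure of the $\mathrm{Spin}_{12}$-quartic. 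The Igusa quartic admits a linear (rank $6$) matrix factorization $(\varphi,\psi)$ with $\varphi\psi = \psi\varphi = f_{\sQ}\cdot \mathrm{Id}$; restricting this matrix factorization to the linear section $\mathbb{P}^5$ and pulling back along $\pi$ (so that $\pi^*f_{\sQ|\mathbb{P}^5} = x^2$ becomes a square on $X$) yields a rank $6$ sheaf $\E$ on $X$ supported on the ramification locus, which one then spreads out to a genuine rank $6$ vector bundle on $X$ via the cokernel of $\pi^*\varphi$.

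The second step is cohomological: one must check that $\E$ (or its image in the CY-$3$ category) is spherical, i.e. $\mathrm{Ext}^\bullet_X(\E,\E) = \CC \oplus \CC[3]$. Here I would first compute $\mathrm{R}\Hom$ on $X$ by pushing forward to $\mathbb{P}^5$: $\pi_*\OO_X = \OO_{\mathbb{P}^5} \oplus \OO_{\mathbb{P}^5}(-2)$, so $\mathrm{R}\Hom_X(\E,\E)$ decomposes into a ``$\sigma$-invariant'' and ``$\sigma$-anti-invariant'' part (for $\sigma$ the covering involution), each computable from the matrix factorization data on $\mathbb{P}^5$ — essentially a Koszul/Eagon–Northcott type complex resolving $\mathrm{coker}\,\varphi$ on $\mathbb{P}^5$. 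The Macaulay$2$ computations advertised in the abstract enter precisely here: one verifies the vanishing of the intermediate $\mathrm{Ext}^i$ for $i=1,2$ and the one-dimensionality of $\mathrm{Ext}^3$ by a direct computation with the explicit $6\times 6$ matrices over the coordinate ring of $\mathbb{P}^5$, after specializing $f_4$ to a generic (but explicit) quartic. Semicontinuity then propagates the result to the generic $f_4$.

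The third step is to place $\E$ inside Kuznetsov's CY-$3$ category $\mathcal{A}_X$, the semiorthogonal complement of an exceptional-type block in $\DB(X)$ coming from the double-cover/quadric-fibration structure (here $\DB(X) = \langle \mathcal{A}_X, \pi^*\DB(\mathbb{P}^5) \otimes \OO_X(?) \rangle$-type decomposition). Concretely I would show that the natural projection functor kills $\pi^*$ of line bundles applied to $\E$ appropriately, or replace $\E$ by its projection $\Phi(\E)$, and argue that $\Phi$ is fully faithful on the subcategory generated by $\E$ so that sphericality is preserved; since $\mathcal{A}_X$ is a CY-$3$ category, any object with $\mathrm{Ext}^\bullet = \CC\oplus\CC[3]$ in it is automatically spherical in the strong sense. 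The main obstacle, I expect, is the second step: controlling the full $\mathrm{Ext}$-algebra of a rank $6$ bundle on a fivefold is not something that follows from adjunction alone, and the reduction to a tractable Macaulay$2$ computation requires first identifying $\E$ with an explicit cokernel module and resolving it — the bookkeeping between the $\mathrm{Spin}_{12}$-equivariant description and the affine-chart presentation on $\mathbb{P}^5$ is where the real work lies, together with checking that the chosen explicit $f_4$ is generic enough for semicontinuity to apply.
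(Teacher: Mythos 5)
Your proposal captures the broad outline of the paper's argument (Vinberg type $\mathrm{II}$ matrix factorization, Macaulay$2$ vanishing checks, semicontinuity, then Serre duality inside Kuznetsov's CY-$3$ category), but there are two substantive gaps, one factual and one structural.

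First, the matrix factorization of the Igusa quartic $\sQ$ that the paper uses is not a ``rank $6$'' (i.e.\ $6\times 6$) factorization. The operator $(\ad_z^{\mathfrak{e}_7})^2$ lands in $\mathfrak{so}_{12}\subset\mathfrak{gl}_{12}$, so the matrix factorization is $12\times 12$ with quadratic entries; after twisting by $\pm ix\cdot I_{12}$ and restricting to a linear $\mathbb{P}^5$, the cokernel on the quartic double fivefold has rank $6$ because $\det$ has degree $24 = 6\cdot 4$. A $6\times 6$ factorization with quadratic entries would give a rank $3$ cokernel, which is exactly the auxiliary sheaf $E_L$ the paper builds for the $\mathrm{SL}_6$-quartic $\lQ\subset\mathbb{P}(\bigwedge^3\mathbb{C}^6)$, not for $\sQ$. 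Conflating these two cases obscures the whole point of introducing the $\mathfrak{e}_6$ story.

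Second, and more importantly, your plan of ``direct Macaulay$2$ computation at an explicit generic $f_4$, then semicontinuity'' misses the key reduction that makes the computation tractable and is the central technical novelty of the proof. The paper does not compute $\mathrm{Ext}^{\bullet}(\tilde{E}_L,\tilde{E}_L)$ at a generic $L\subset\mathbb{P}(\Delta)$. Instead it works at a very \emph{special} $L_0\subset\mathbb{P}(\bigwedge^3\mathbb{C}^6)\subset\mathbb{P}(\Delta)$, where the bigraded decomposition of $\mathfrak{e}_7$ (combining the type $\mathrm{II}$ grading by $\mathbb{P}(\Delta)$ with the type $\mathrm{I}$ grading by $\mathrm{SL}_6\subset\mathrm{Spin}_{12}$) forces $(\ad_y^{\mathfrak{e}_7})^2$ to be block-diagonal $\left(\begin{smallmatrix}S_y & 0\\ 0 & -S_y^{t}\end{smallmatrix}\right)$ with $S_y=(\ad_y^{\mathfrak{e}_6})^2$ the Kimura--Sato $6\times 6$ matrix. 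This reduces $\mathrm{Hom}$ and $\mathrm{Ext}^1$ of the rank-$6$ bundle to the corresponding groups for the rank-$3$ sheaf $E_{L_0}$ (plus vanishing of cross-terms), which is what Macaulay$2$ can actually handle. The semicontinuity step is then also subtler than you indicate: at $L_0$ one finds $\dim\mathrm{Hom}(\tilde E_{L_0},\tilde E_{L_0})=2$ (reflecting the block-diagonal decomposition), and one must argue separately that for generic $L$ the off-diagonal blocks $K_L, M_L$ are nonzero, which cuts $\mathrm{Hom}$ down to $\mathbb{C}\cdot I_{12}$; semicontinuity only gives an upper bound. Finally, the $\mathrm{Ext}^2$, $\mathrm{Ext}^3$ computations you suggest feeding to Macaulay$2$ are not done computationally at all: they come for free from Serre duality once $\tilde{E}_L(-1),\tilde{E}_L(-2)$ are shown to lie in the CY-$3$ component $\A_{X_L}$ of Orlov's/Kuznetsov's decomposition. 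Without the $\mathfrak{e}_6\hookrightarrow\mathfrak{e}_7$ reduction and the block-diagonal specialization, the computational step of your proposal would very likely not go through in practice.
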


Our proof of Theorem \ref{theo1} uses the strategy already highlighted in \cite{adler, beauville, maniliev}. Namely, if $\sP$ is an equation for the $\mathrm{Spin}_{12}$ invariant quartic $\sQ \subset \mathbb{P}^{31}$, we prove that the pull-backs of the partial derivatives of $\sP$ to $\mathbb{C}[z_1,\ldots,z_6]$ by a generic $32 \times 6$ matrix generate $S^4 \mathbb{C}^6$. We then deduce that the natural map:

\begin{equation*}
\GG(6,\Delta) /\!\!/ \mathrm{Spin}_{12} \longrightarrow S^4 \mathbb{C}^6 /\!\!/  \mathrm{GL}_6
\end{equation*}
which associates to $L \in \GG(6,\Delta)$ its intersection with $\sQ \subset \mathbb{P}(\Delta)$ is generically \'etale. The computation of the dimension of the space generated by the pull-backs of the partial derivatives of $\sP$ to $\mathbb{C}[z_1,\ldots,z_6]$ is done using Macaulay$2$ \cite{M2}.

\bigskip

In order to demonstrate Theorem \ref{theo2}, we use the basic geometry of some exceptional quaternionic representations (\cite{GW1, clerc, slupistan}). We first start with the Vinberg type $\mathrm{II}$ decomposition of the Lie algebra $\frak{e_6}$:
\begin{equation} \label{equa0}
\frak{e_6} = \mathbb{C}^* \oplus (\bigwedge^3 \mathbb{C}^6)^* \oplus \frak{gl}_6 \oplus \bigwedge^3 \mathbb{C}^6 \oplus \mathbb{C},
\end{equation}
The properties of type $\mathrm{II}$ grading for exceptional quaternionic representations representations entail that for any $y \in \bigwedge^3 \mathbb{C}^6$, we have:

\begin{equation*}
(\ad_y^{\mathfrak{e_6}})^4(X_{-\beta}) = \lP(y).X_{\beta},
\end{equation*}
where $X_{-\beta}$ and $X_{\beta}$ are generators of the one-dimensional factors appearing in degree $-2$ and $2$ in the decomposition (\ref{equa0}) and $\lP$ is the equation of the $\mathrm{SL}_6$ invariant quartic $\lQ \subset \mathbb{P}(\bigwedge^3 \mathbb{C}^6)$. As $(\ad^{\mathfrak{e_6}}_y)^2$ is an element of $\frak{gl}_6^* \simeq \frak{gl}_6$, we deduce that the pair $((\ad^{\mathfrak{e_6}}_y)^2,(\ad^{\mathfrak{e_6}}_y)^2)$ is a matrix factorization of $\lP$ (see lemma \ref{matfacto}). The matrices $B = (\ad^{\mathfrak{e_6}}_y)^2 + ix.I_6$ and $C = (\ad^{\mathfrak{e_6}}_y)^2 -ix.I_6$ form therefore a matrix factorization of $\lP(y) + x^2$. Restricting $B$ and $C$ to a generic $\mathbb{P}^5 \subset \mathbb{P}(\bigwedge^3 \mathbb{C}^6)$, we deduce the existence of a specific matrix factorization for the quartic double fivefold ramified along a generic fourfold linear section of $\lQ$. Cohomological properties of the restriction of the cokernel of $B$ to the quartic double fivefold determined by the choice of this $\mathbb{P}^5$ will play a crucial role in the proof of Theorem \ref{theo2}.

\bigskip

In \cite{maniliev}, Iliev and Manivel used similar ideas in order to construct a spherical rank $9$ vector bundle on the generic cubic sevenfold. Their proof that this vector bundle is spherical highlights the impressive virtuosity of the authors in manipulating the Borel-Bott-Weil Theorem in type $E_6$. Our cohomological study of the cokernel of $B$ is certainly less elegant but it has the merit to be more accessible to the layman : we compute the necessary Ext-groups using Macaulay$2$ \cite{M2}. The implementation of the matrix representing $(\ad_y^{\mathfrak{e_6}})^2 +ix.I_6$ follows an explicit description given in \cite{kimura}. 

\bigskip

In order to prove Theorem \ref{theo2}, we consider the type $\mathrm{II}$ decomposition:

\begin{equation}\label{equa00} 
\frak{e_7} = \mathbb{C}^* \oplus \Delta^* \oplus \frak{so}_{12} \oplus \mathbb{C} \oplus \Delta \oplus \mathbb{C}.
\end{equation}
Once again, the properties of type $\mathrm{II}$ grading for quaternionic representations yield that for any $z \in \Delta$, we have:

\begin{equation*}
(\ad_z^{\mathfrak{e_7}})^4(X_{-\beta}) = \sP(z).X_{\beta},
\end{equation*}
where $X_{-\beta}$ and $X_{\beta}$ are generators of the one-dimensional factors appearing in degree $-2$ and $2$ in the decomposition (\ref{equa00}) and $\sP$ is the equation of the $\mathrm{Spin}_{12}$-invariant quartic $\sQ \subset \mathbb{P}(\Delta)$. 

\bigskip

Let $L \subset \mathbb{P}(\Delta)$ be a generic $\mathbb{P}^5$ and denote by $X_L$ the quartic double fivefold ramified over $L \cap \sQ$. The restrictions to $L$ of the matrices $\tilde{B} = (\ad^{\mathfrak{e_7}}_z)^2 + ix.I_{12}$ and $\tilde{C} = (\ad^{\mathfrak{e_7}}_z)^2 -ix.I_{12}$ provide a matrix factorization of the equation of $X_L$ in $\mathbb{P}(1,\ldots,1,2)$. If $L_0$ is a generic $\mathbb{P}^5$ in $\mathbb{P}(\bigwedge^3 \mathbb{C}^6)$, we are able to relate the cohomological properties of the restriction to $X_L$ of cokernel of $\tilde{B} (\ad^{\mathfrak{e_7}}_z)^2 + ix.I_{12}$ to those of the restriction to $X_{L_0}$ of the cokernel of $B(\ad^{\mathfrak{e_6}}_y)^2 + ix.I_6$. We then deduce that a twist of the restriction to $X_L$ of the cokernel of $\tilde{B}$ is the spherical rank $6$ vector bundle whose existence is claimed in Theorem \ref{theo2}.

\bigskip

In the last section of this paper, we discuss a <<topological>> application of the existence of a spherical vector bundle on the generic quartic double fivefold. In \cite{homounit}, the concept of \textit{homological unit} was introduced for a large class of triangulated categories as a replacement for the algebra $H^{\bullet}(\OO_X)$ when the category under study is not (necessarily) the derived category of a projective variety. We prove here (see section $3.3$) that the homological unit of the $3$ dimensional Calabi-Yau category associated to the derived category of a generic quartic double fivefold is $\mathbb{C} \oplus \mathbb{C}[3]$. This computation shows that the $3$-dimensional Calabi-Yau category associated to the derived category of a generic double quartic fivefold is really a non-commutative analogue of a Calabi-Yau threefold, and not just a triangulated category whose Serre functor is the shift by $[3]$.

\vspace{0.5cm}

\noindent \textbf{Acknowledgements :} I am very grateful to Matt B. Young for patiently explaining to me some aspects of \cite{CDP}, \cite{Sch} and \cite{BBVW}. I am very thankful to Laurent Manivel for crucial pointers in the proofs of Lemmas \ref{matfacto} and \ref{matfacto2} and for pointing out to me that it is non-trivial to check condition $2$ in the definition of manifolds of Calabi-Yau type.

\end{section}

\begin{section}{Linear sections of the $\mathrm{Spin}_{12}$ invariant quartic in $\mathbb{P}^{31}$}

\subsection{The $\mathrm{Spin}_{12}$ quartic invariant in $\mathbb{P}^{31}$}

We will start this section with a quick reminder of the work of Igusa \cite{igusa} on spinors of dimension $12$. Let $\Delta$ be one of the half-spin representation of dimension $32$ for the group $\Sp_{12}$. Recall that, as a vector space, we have:

\begin{equation*}
\Delta \simeq \mathbb{C} \oplus \bigwedge^2 \mathbb{C}^6 \oplus \bigwedge^4 \mathbb{C}^6 \oplus \bigwedge^6 \mathbb{C}^6
\end{equation*}
Igusa proves that $\mathbb{P}(\Delta)$ is a prehomogeneous space for $\Sp_{12}$ and that this space has a relative invariant which is of degree $4$. Let us denote by $P_{\mathrm{Spin}_{12}}$ the equation of this relative invariant and by $\Q_{\mathrm{Spin}_{12}} \subset \mathbb{P}^{31}$ the corresponding quartic hypersurface. Igusa gives an explicit description for $P_{\mathrm{Spin}_{12}}$, namely :

\begin{equation*}
 P_{\mathrm{Spin}_{12}}(x) = x_0 \Pff((x_{i,j})) + y_0 \Pff((y_{i,j})) + \sum_{i<j} \Pff((X_{i,j})) \Pff((Y_{i,j})) - \dfrac{1}{4} \left(x_0y_0-\sum_{i<j} x_{i,j}y_{i,j} \right)^2,
\end{equation*}

for $x = x_0 + \sum_{i<j} x_{i,j}.e_i \wedge e_j + \sum_{i<j} y_{i,j}.(e_i \wedge e_j)^* + y_0.e_1 \wedge e_2 \wedge e_3 \wedge e_4 \wedge e_5 \wedge e_6$ in which $(x_{i,j})$ (resp. $(y_{i,j})$) is the alternating matrix determined by the $x_{i,j}$ (resp. $y_{i,j}$) and $(X_{i,j})$ (resp. $(Y_{i,j})$) is the alternating matrix obtained from $(x_{i,j})$ (resp. $(y_{i,j})$) by crossing out its $i$-th and $j$-th lines and columns.

\begin{rem}
The quartic hypersurface in $\mathbb{P}^{31}$ which equation is given above is the tangent variety of the spinor variety $\mathbb{S}_{12} \subset \mathbb{P}^{31}$. This spinor variety appears in the third line of the Tits-Freudenthal magic square as the symplectic Grassmannian $\GG_{\omega}(\mathbb{H}^3,\mathbb{H}^6)$ where $\mathbb{H}$ is the algebra of complexified quaternions. In \cite{manivel}, Landsberg and Manivel found a general formula for the equation of the tangent variety of the homogeneous spaces which appear in the third line of the magic square. Namely, this formula gives a uniform presentation for the equation of the tangent variety of $v_3(\mathbb{P}^1) \subset \mathbb{P}^3$, $\GG_{w}(3,6) \subset \mathbb{P}^{13}$, $\GG(3,6) \subset \mathbb{P}^{19}$, $\mathbb{S}_{12} \subset \mathbb{P}^{31}$ and $\mathrm{E}_7/\mathrm{P}_7 \subset \mathbb{P}^{55}$.

\end{rem}

\subsection{Four-dimensional linear sections of the quartic $\Q_{\mathrm{Spin}_{12}} \subset \mathbb{P}^{31}$}

In \cite{maniliev}, Iliev and Manivel proved that a generic cubic hypersurface in $\mathbb{P}^8$ can be written, in a finite umber of ways, as a linear section of the $E_6$ invariant cubic  in $\mathbb{P}^{26}$. They observed a similar numerical coincidence in the case of quartic hypersurfaces in $\mathbb{P}^5$. More precisely, let us denote by $\mathcal{M}_{4}^4$ the moduli space of quartic hyperurfaces in $\mathbb{P}^5$. There is a natural map $\Phi : \GG(6,\Delta) /\!\!/ \mathrm{Spin}_{12} \longrightarrow \mathcal{M}_{4}^4$, which is given by restriction of the quartic $\Q_{\mathrm{Spin}_{12}}$ to a given $\mathbb{P}^{5} \subset \mathbb{P}^{31}$. Note that the singular locus of $\sQ$ has dimension $24$, so that a generic $4$-dimensional linear section of $\sQ$ is smooth.

Iliev and Manivel noted that both $\mathcal{M}_{4}^4$ and $\GG(6,\Delta) /\!\!/ \mathrm{Spin}_{12}$ have dimension $90$ and they ask if the map $\Phi$ is dominant. We answer positively to their question :

\begin{theo} \label{section}
Let $\Phi :  \GG(6,\Delta) /\!\!/ \mathrm{Spin}_{12} \longrightarrow \mathcal{M}_{4}^4$ be the map which associates to $L$ the quartic $L \cap \Q_{\mathrm{Spin}_{12}}$. The map $\Phi$ is generically \'etale and dominant. In particular a generic quartic hypersurface in $\mathbb{P}^{5}$ can be represented as a linear section of $\Q_{\mathrm{Spin}_{12}} \subset \mathbb{P}(\Delta)$ in a finite number of ways. As a consequence, the generic quartic double fivefold is a manifold of Calabi-Yau type.
\end{theo}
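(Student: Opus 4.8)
The plan is to reduce the statement to the surjectivity of the differential of $\Phi$ at a generic point, and then to establish that surjectivity by a concrete rank computation. First I would fix a $6$-dimensional subspace $W \subset \Delta$, realized by a $32 \times 6$ matrix $M$ whose columns span $W$, so that the composition $\mathbb{C}[z_1,\dots,z_6] \hookrightarrow \mathbb{C}[x_0,\dots] \to \mathbb{C}[z_1,\dots,z_6]$ sends $\sP$ to $f := \sP \circ M \in S^4\mathbb{C}^6$. The key observation is that the tangent space to the image of $\Phi$ at $[f]$, modulo the $\mathrm{GL}_6$-action, is governed by the span of the partial derivatives $\partial f/\partial z_k$ inside $S^3\mathbb{C}^6$ together with the first-order variations coming from moving $W$ inside $\Gr(6,\Delta)$ modulo $\Sp_{12}$. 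Since $\dim \Gr(6,\Delta) /\!\!/ \Sp_{12} = \dim \mathcal{M}_4^4 = 90$ (as Iliev--Manivel observed), it suffices to prove that $d\Phi$ is injective, or equivalently surjective, at one point; by semicontinuity a single explicit example with full-rank differential forces $\Phi$ to be generically \'etale and dominant.

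Next I would make this explicit following the strategy of \cite{adler, beauville, maniliev}: the cokernel of $d\Phi$ at $[f]$ is identified with $S^4\mathbb{C}^6$ modulo the subspace spanned by $z_k \cdot \partial f/\partial z_k$-type expressions, i.e. modulo the image of the partial derivatives of $\sP$ pulled back along $M$. Concretely, the claim to verify is that the six polynomials $M^*(\partial \sP/\partial x_\alpha)$, $\alpha = 0,\dots,31$ — the pull-backs to $\mathbb{C}[z_1,\dots,z_6]$ of the $32$ partials of Igusa's quartic $\sP$ — span all of $S^4\mathbb{C}^6$, which has dimension $\binom{9}{4} = 126$. Equivalently, for a generic $M$ the $126 \times 32$ matrix of coefficients of these pulled-back cubics-times-linear-forms (or rather the degree-$4$ pieces) has rank $126$. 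I would set up this linear algebra problem over $\mathbb{Q}$ (or a finite field, to speed things up and then lift), pick $M$ with random integer entries, and run the rank computation in Macaulay$2$ \cite{M2}; obtaining rank $126$ at one such $M$ proves it for generic $M$ by semicontinuity.

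Once the pull-backs of the partials of $\sP$ generate $S^4\mathbb{C}^6$, I would conclude that $d\Phi$ is surjective at the corresponding generic point: the $\mathrm{GL}_6$-orbit directions account for the image of the Euler-type vectors, and the remaining directions in $S^4\mathbb{C}^6 /\!\!/ \mathrm{GL}_6$ are exactly hit by the normal directions to $W$ in $\Gr(6,\Delta)$, because those normal directions act on $f$ precisely through the partials $M^*(\partial\sP/\partial x_\alpha)$. Since source and target have the same dimension $90$, $\Phi$ is generically \'etale, hence dominant, and the generic fiber is finite and non-empty — giving the finitely-many-representations statement. For the final clause, I would invoke the criterion of Iliev--Manivel (\cite{maniliev2}, section $4.4$, proposition $4.5$): a generic quartic double fivefold is of Calabi-Yau type if and only if it is a finite linear section of the projective dual of its associated homogeneous space, which here is the tangent variety $\sQ$ of the spinor tenfold $\mathbb{S}_{12}$; and a double cover of $\mathbb{P}^5$ ramified along $L \cap \sQ$ is exactly such a finite linear section, so the Calabi-Yau type property follows.

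The main obstacle is the rank computation: it is not a priori obvious that the $32$ pull-backs $M^*(\partial\sP/\partial x_\alpha)$ span a space as large as $126$-dimensional — naively $32$ cubics-times-linear-forms give at most $32$ independent elements, so the point is subtler: one must instead look at how the full tangent space to $\Gr(6,\Delta)$ (of dimension $6 \times 26 = 156$) maps to $S^4\mathbb{C}^6$, modulo the $\mathfrak{so}_{12}$-directions and modulo the $\mathfrak{gl}_6$-directions, and check this composite map has image of the correct codimension. Getting the bookkeeping of these quotients exactly right — identifying which linear combinations of the $156$ tangent directions to $\Gr(6,\Delta)$ descend to genuine first-order deformations of $[f] \in \mathcal{M}_4^4$, and verifying via Macaulay$2$ that the resulting map $\mathbb{C}^{156} \to S^4\mathbb{C}^6$ has rank $90$ modulo the $36$-dimensional $\mathfrak{gl}_6$-image — is the technical heart of the argument. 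If the computation returns the expected rank at one random rational point $M$, semicontinuity closes everything; if not, one would need to re-examine whether the generic section of $\sQ$ is really smooth and whether $M$ was chosen outside a bad locus.
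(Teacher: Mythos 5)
You have the correct high-level strategy, which is exactly the paper's: reduce to generic surjectivity of the differential, verify it by a Macaulay$2$ rank computation at one explicit point, invoke semicontinuity, and conclude the Calabi--Yau type statement via Proposition~$4.5$ of \cite{maniliev2}. However, the middle step --- identifying precisely which linear map must be shown surjective --- is garbled, and this is the technical heart of the argument.

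You assert that ``the $32$ pull-backs $M^*(\partial\sP/\partial x_\alpha)$, $\alpha=0,\dots,31$, span all of $S^4\mathbb{C}^6$'' and that the corresponding $126\times 32$ coefficient matrix has rank $126$. This is dimensionally impossible: each $M^*(\partial\sP/\partial x_\alpha)$ is a \emph{cubic} in $z_1,\dots,z_6$, hence lives in $S^3\mathbb{C}^6$ and cannot contribute to $S^4\mathbb{C}^6$ at all; and a $126\times 32$ matrix has rank at most $32$. You notice this in the final paragraph, but then propose a detour through the $156$-dimensional tangent space to $\Gr(6,\Delta)$ followed by a quotient by $\mathfrak{so}_{12}$- and $\mathfrak{gl}_6$-directions, which is both more delicate to implement and unnecessary. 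The clean formulation (which the paper uses) is to lift $\Phi$ to the map
\[
\phi : M_{6\times\Delta} \longrightarrow S^4\mathbb{C}^6, \qquad (m_{i,j}) \longmapsto \sP\!\left(\textstyle\sum_j m_{1,j}z_j,\;\dots,\;\sum_j m_{32,j}z_j\right),
\]
on the $192$-dimensional affine space of $32\times 6$ matrices, so that its differential at $m$ is
\[
d\phi_m(q) \;=\; \sum_{i=1}^{32}\Bigl(\sum_{j=1}^{6} q_{i,j}z_j\Bigr)\cdot\frac{\partial\sP}{\partial x_i}\!\left(\textstyle\sum_j m_{1,j}z_j,\dots\right).
\]
Surjectivity of $d\phi_m$ is then equivalent to the $32\times 6 = 192$ \emph{quartics} $z_j\cdot\frac{\partial\sP}{\partial x_i}(m\cdot z)$ spanning the $126$-dimensional space $S^4\mathbb{C}^6$ --- i.e.\ a $126\times 192$ matrix has rank $126$, which is numerically possible and is what Macaulay$2$ confirms. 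The GIT quotients on both sides are handled automatically because $\phi$ is $\mathrm{GL}_6$-equivariant and $\Sp_{12}$-invariant; since $\dim\GG(6,\Delta)/\!\!/\Sp_{12} = \dim\mathcal{M}_4^4 = 90$, surjectivity of $d\phi$ at one $m$ forces the descended $\Phi$ to be generically étale. You should rewrite the rank computation with this count of $192$ quartics rather than $32$ cubics; everything else in your proposal then goes through.
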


\begin{proof}

First of all, we notice that $ \GG(6,\Delta) = M_{6 \times \Delta} /\!\!/ \mathrm{GL}_{6}$ and that $\mathcal{M}_{4}^{4} = S^4 \mathbb{C}^{6} /\!\!/  \mathrm{GL}_6$, where $M_{6 \times \Delta}$ is the space of linear maps from $\mathbb{C}^6$ to $\Delta$. Furthermore, the map $\Phi$ is the descent of the natural map:

\begin{equation*}
\begin{split}
\phi \,\,\, : \,\,\, & M_{6 \times \Delta} \longrightarrow S^4 \mathbb{C}^{6} \\
         & (m)_{i,j} \longrightarrow P_{\mathrm{Spin}_{12}}(m_{1,1}.z_1 + \cdots + m_{1,6}.z_6, \cdots, m_{32,1}.z_1 + \cdots + m_{32,6}.z_6),\\ 
\end{split}         
\end{equation*}
where $z_1,\cdots,z_6$ is a basis for $(\mathbb{C}^{6})^*$. In order to prove Theorem \ref{section}, we just need to prove that the differential of $\phi$ is generically surjective. The differential of $\phi$ at a point $(m)_{i,j} \in M_{6 \times \Delta}$ is given by :
 
\begin{equation*}
\begin{split}
d \phi_{(m_{i,j})} \,\,\, : \,\,\, & M_{6 \times \Delta} \longrightarrow S^4 \mathbb{C}^{6} \\
         & (q)_{i,j} \longrightarrow \sum_{i=1}^{32} \left( \sum_{j=1}^{6} q_{i,j}.z_j \right) \times \dfrac{\partial P_{\mathrm{Spin}_{12}}}{\partial x_i}(m_{1,1}.z_1 + \cdots + m_{1,6}.z_6, \cdots, m_{32,1}.z_1 + \cdots + m_{32,6}.z_6),\\
\end{split}
\end{equation*}        
where $x_1, \cdots, x_{32}$ is a basis of $\Delta^*$ in which the equation of $P_{\mathrm{Spin}_{12}}$ is given. As a consequence, in order to prove that $d \phi$ is generically surjective, we only have to prove the lemma below.

\begin{lem}
For generic $m \in M_{6 \times \Delta}$, the dimension of the subspace of $S^{4} \mathbb{C}^{6}$ generated by the:

\begin{equation*}
\{z_j.\dfrac{\partial P_{\mathrm{Spin}_{12}}}{\partial x_i}(m_{1,1}.z_1 + \cdots + m_{1,6}.z_6, \cdots, m_{32,1}.z_1 + \cdots + m_{32,6}.z_6) \}_{i=1 \cdots 32, j=1 \cdots 6}
\end{equation*}
has dimension $126$ and is equal to $S^4 \mathbb{C}^6$.
\end{lem}

\begin{proof}

We use Macaulay$2$ to prove that the lemma. A similar algorithm is provided in \cite{beauville} in order to prove that a general threefold in $\mathbb{P}^{4}$ of degree less than $5$ is Pfaffian. 
\end{proof}
Finally, using proposition $4.5$ (and comments thereafter) of \cite{maniliev}, we deduce from the finite representation statement we just proved that the generic quartic double fivefold is a manifold of Calabi-Yau type.

\end{proof}

\end{section}

\begin{section}{Matrix factorizations on generic quartic double fivefolds}
In this section we prove the existence of a spherical rank $6$ vector bundle on the double cover of $\mathbb{P}^5$ ramified over a general fourfold linear section of $\sQ$. In fact, we will derive the vanishing properties of this vector bundle from those of a rank $3$ coherent sheaf defined on the double quartic fivefold ramified over a general section of $\lQ \subset \mathbb{P}(\bigwedge^3 \mathbb{C}^6)$. Thus, we start our study with a particular matrix factorization on the $\mathrm{SL}_6$-invariant quartic $\lQ \subset \mathbb{P}(\bigwedge^3 \mathbb{C}^6)$, whose cokernel will be the sheaf we are interested in.

\subsection{Foretaste : matrix factorizations on quartic double fivefolds ramified over linear sections of $\lQ \subset \mathbb{P}(\bigwedge^3 \mathbb{C}^6)$}
Let $u_1,\ldots,u_6$ be a basis of $\mathbb{C}^6$ and let $y_1, \ldots, y_{20}$ be coordinates on $\bigwedge^3 \mathbb{C}^6$ such that any $y \in \bigwedge^3 \mathbb{C}^6$ can be written $y = y_1.u_1 \wedge u_2 \wedge u_3 + \ldots + y_{20}.u_5 \wedge u_5 \wedge u_6$. Denote by $P_{\mathrm{SL}_6}$ the equation of $\lQ$. The explicit formula for $\lQ$ is given, for instance, on page $83$ of \cite{kimura}. Denote by:

\begin{equation*}
\begin{split}
Y^{(a)} := \left(
\begin{matrix}
y_{11} & -y_5 & y_2\\
y_{12} & -y_6 & y_3\\
y_{13} & -y_7 & y_4\\
\end{matrix}
\right) & \hspace{2cm}
Y^{(b)} := \left(
\begin{matrix}
y_{10} & -y_9 & y_8\\
y_{16} & -y_{15} & y_{14}\\
y_{19} & -y_{18} & y_{17}\\
\end{matrix}
\right),
\end{split}
\end{equation*}
then we have:

\begin{equation*}
\lP(y) = \left(y_1y_{20} - \operatorname{Tr}(Y^{(a)} Y^{(b)}) \right)^2 + 4y_1 \det(Y^{(b)}) + 4y_{20} \det(Y^{(a)}) - 4 \sum_{i,j}\det(Y_{i,j}^{(a)}) \det(Y_{i,j}^{(b)}),
\end{equation*}
where $Y^{(a)}_{i,j}$ (resp. $Y^{(b)}_{i,j}$) is the matrix obtained from $Y^{(a)}$ (resp. $Y^{(b)}$) by crossing its $i$-th line and $j$-th comlumn.
We recall the expression of a special matrix factorization of $\lP$ that was explicited by Kimura and Sato (see \cite{kimura} page $80$ and $81$). For $k=1 \ldots 6$, define the operators:

\begin{equation*}
\begin{split}
D_k : & \bigwedge^k \mathbb{C}^6 \longrightarrow \bigwedge^{k-1} \mathbb{C}^6 \otimes \mathbb{C}^6\\
& u_{i_1} \wedge \ldots \wedge u_{i_k} \longrightarrow \sum_{r=1}^k (-1)^{k-r} \left(u_{i_1} \wedge \ldots \wedge u_{i_{r-1}} \wedge u_{i_{r+1}} \wedge \ldots \wedge u_{i_k}\right) \otimes u_{i_r}.
\end{split}
\end{equation*}

For each $y \in \bigwedge^{3} \mathbb{C}^6$ and each $z \in \bigwedge^4 \mathbb{C}^6$, we have $(z \otimes 1) \wedge D_3(y) \in \bigwedge^6 \mathbb{C}^6 \otimes \mathbb{C}^6 = \tau \otimes \mathbb{C}^6$, where $\tau = u_1 \wedge \ldots \wedge u_6$ is the canonical volume form on $\bigwedge^6 \mathbb{C}^6$. Hence, there exists a bilinear map $L : \bigwedge^4 \mathbb{C}^6 \times \bigwedge^3 \mathbb{C}^6 \longrightarrow \mathbb{C}^6$ such that $(z \otimes 1) \wedge D_3(y) = \tau \otimes L(z,y)$ for all $y \in \bigwedge^3 \mathbb{C}^6$ and $z \in \bigwedge^4 \mathbb{C}^6$. Now, for each $y \in \bigwedge^3 \mathbb{C}^6$, we define an operator:

\begin{equation*}
\begin{split}
S_{y} : &  \,\,  \mathbb{C}^6 \longrightarrow \mathbb{C}^6\\
& \theta \longrightarrow L(\theta \wedge y,y)
\end{split}
\end{equation*}

Kimura and Sato proves the following (\cite{kimura}, proposition $7$ page $81$):

\begin{prop} \label{kimkim}
For all $y \in \bigwedge^3 \mathbb{C}^6$, we have $S_y^2 = \lP(y). I_6$, where $I_6$ is the $6 \times 6$ identity.
\end{prop}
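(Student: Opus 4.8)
The statement $S_y^2 = \lP(y)\cdot I_6$ is an identity between two quadratic operators on $\mathbb{C}^6$ depending polynomially (in fact quartically in the matrix entries, quadratically in $y$ on each side) on $y\in\bigwedge^3\mathbb{C}^6$. My plan is to reduce it to a computation on a single well-chosen $y$ together with an equivariance argument. First I would unwind the definitions: $S_y(\theta)=L(\theta\wedge y,y)$ where $L$ is characterized by $(z\otimes 1)\wedge D_3(y)=\tau\otimes L(z,y)$. Writing $\theta=u_k$ for a basis vector, $S_y(u_k)$ is extracted from $(u_k\wedge y)\otimes 1)\wedge D_3(y)\in\bigwedge^6\mathbb{C}^6\otimes\mathbb{C}^6$, so the matrix of $S_y$ in the basis $u_1,\dots,u_6$ has entries that are explicit quadratic forms in $y_1,\dots,y_{20}$ — these are exactly the entries written out via the blocks $Y^{(a)}, Y^{(b)}$ and the scalars $y_1,y_{20}$ in the displayed formula for $\lP$. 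Squaring this $6\times 6$ matrix symbolically and matching against $\lP(y)\cdot I_6$ is then a finite check.

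The cleaner route, and the one I would actually carry out, uses $\mathrm{SL}_6$-equivariance. Both $y\mapsto S_y^2$ and $y\mapsto \lP(y)I_6$ are $\mathrm{SL}_6$-equivariant maps $\bigwedge^3\mathbb{C}^6\to\mathrm{End}(\mathbb{C}^6)$ that are homogeneous of degree $4$ in $y$: the map $L$ is built from the wedge product and the canonical volume form $\tau$, hence is $\mathrm{SL}_6$-equivariant, $\lP$ is $\mathrm{SL}_6$-invariant by Sato–Kimura, and $I_6$ spans the trivial summand of $\mathrm{End}(\mathbb{C}^6)=\mathfrak{sl}_6\oplus\mathbb{C}$. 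Since $\mathbb{P}(\bigwedge^3\mathbb{C}^6)$ is a prehomogeneous space for $\mathrm{SL}_6$, the generic orbit is dense; so if I verify $S_y^2=\lP(y)I_6$ for one $y$ in the open orbit, equivariance and density force the identity for all $y$ in that orbit, and then Zariski continuity (both sides are polynomial in $y$) extends it to all of $\bigwedge^3\mathbb{C}^6$. A convenient representative of the open orbit is $y_0 = u_1\wedge u_2\wedge u_3 + u_4\wedge u_5\wedge u_6$ (for which $\lP(y_0)=1$ up to normalization), or the slightly more symmetric $u_1\wedge u_2\wedge u_3+u_1\wedge u_5\wedge u_6+u_4\wedge u_2\wedge u_6+u_4\wedge u_5\wedge u_3$; for such a $y_0$ one computes $D_3(y_0)$, then $L(u_k\wedge y_0,y_0)$ for each $k$, reads off the matrix of $S_{y_0}$, and checks it squares to $\lP(y_0)I_6$ — a short explicit calculation.

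Alternatively — and this is how Sato–Kimura originally argue, so I would at minimum cite \cite{kimura}, Proposition $7$, p.~$81$ — one can prove the identity directly from the formula for $S_y$ without passing through a special orbit, by organizing $\mathbb{C}^6=\mathbb{C}^3\oplus\mathbb{C}^3$ and writing $S_y$ in block form in terms of $y_1 I_3$, $y_{20} I_3$, $Y^{(a)}$, $Y^{(b)}$ and their adjugates; the off-diagonal blocks of $S_y^2$ vanish by the classical adjugate identity $M\cdot\mathrm{adj}(M)=\det(M)I_3$ combined with the mixed-adjugate relations $\sum_{i,j}\det(Y^{(a)}_{i,j})\det(Y^{(b)}_{i,j})$, and the diagonal blocks collapse to precisely the four terms of $\lP(y)$. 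The main obstacle in either approach is purely bookkeeping: getting the signs in $D_k$ and in the adjugate blocks consistent, and confirming that the scalar produced on the diagonal of $S_y^2$ is $\lP(y)$ with coefficient $+1$ rather than some nonzero multiple — the equivariance argument reduces this to pinning down a single normalization constant, which is why I would prefer it. Since the result is quoted verbatim from \cite{kimura}, I would present the equivariance reduction as the conceptual explanation and defer the one-point verification (or the block computation) to that reference.
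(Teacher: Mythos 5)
Your equivariance reduction is correct and takes a genuinely different route from the paper. The paper gives no proof for this proposition at all --- it is stated as a direct citation to Kimura--Sato (\cite{kimura}, Prop.\ 7, p.\ 81), and the only in-paper verification of $S_y^2=\lP(y)I_6$ occurs in the proof of Proposition \ref{propmatrix}, where it is checked by Macaulay$2$ after the $6\times 6$ matrix has been written out explicitly. Your plan --- observe that $y\mapsto S_y^2$ and $y\mapsto\lP(y)I_6$ are both degree-$4$ homogeneous $\mathrm{SL}_6$-equivariant maps $\bigwedge^3\mathbb{C}^6\to\mathrm{End}(\mathbb{C}^6)$ (the dependence of $L$ on the volume form $\tau$ costs exactly the $\mathrm{GL}_6$ central direction, which is recovered by the degree-$4$ homogeneity), so it suffices to verify the identity at one point $y_0$ with $\lP(y_0)\neq 0$; density of $\{\lP\neq 0\}$ and polynomiality then give the identity globally --- is sound, more conceptual than the brute-force symbolic check, and, importantly, pins down the normalization constant. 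It is morally the same argument the paper deploys later in Lemma \ref{matfacto}, where a decomposition of $S^4(\bigwedge^3\mathbb{C}^6)$ into $\mathrm{SL}_6$-irreducibles plus Schur's lemma shows $(\ad_y)^2\circ(\ad_y)^2 = c\,\lP(y)\,\mathrm{id}$ with $c\neq 0$; but your one-point check also determines $c=1$, which the Schur argument alone leaves open. The block-adjugate computation you sketch as an alternative is essentially what Kimura--Sato actually carry out, so either version of your proposal is a legitimate, more self-contained substitute for the paper's bare citation.
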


We will give the explicit form of the matrix $S_y$, when $y = y_1.u_1 \wedge u_2 \wedge u_3 + \ldots + y_{20}.u_5 \wedge u_5 \wedge u_6$. Since it is too big to be reasonably displayed in \LaTeX , we write it in Macaulay$2$ code (and in any case, we will need this presentation to perform computer-aided calculations with it).

\begin{prop} \label{propmatrix}
The matrix of $S_y$ in the basis $u_1, \ldots, u_6$ is :
\begin{verbatim}
S_y = matrix{{y_10*y_11-y_9*y_12+y_8*y_13+y_7*y_14-y_6*y_15+y_5*y_16-y_4*y_17+
y_3*y_18-y_2*y_19+y_1*y_20,
2*(y_13*y_14-y_12*y_15+y_11*y_16),
2*(y_13*y_17-y_12*y_18+y_11*y_19),
2*(y_15*y_17-y_14*y_18+y_11*y_20),
2*(y_16*y_17-y_14*y_19+y_12*y_20),
2*(y_16*y_18-y_15*y_19+y_13*y_20)},

{2*(-y_7*y_8+y_6*y_9-y_5*y_10),
-y_10*y_11+y_9*y_12-y_8*y_13-y_7*y_14+y_6*y_15-y_5*y_16-y_4*y_17+
y_3*y_18-y_2*y_19+y_1*y_20,
2*(-y_7*y_17+y_6*y_18-y_5*y_19),
2*(-y_9*y_17+y_8*y_18-y_5*y_20),
2*(-y_10*y_17+y_8*y_19-y_6*y_20),
2*(-y_10*y_18+y_9*y_19-y_7*y_20)},

{2*(y_4*y_8-y_3*y_9+y_2*y_10),
2*(y_4*y_14-y_3*y_15+y_2*y_16),
-y_10*y_11+y_9*y_12-y_8*y_13+y_7*y_14-y_6*y_15+y_5*y_16+y_4*y_17-
y_3*y_18+y_2*y_19+y_1*y_20,
2*(y_9*y_14-y_8*y_15+y_2*y_20),
2*(y_10*y_14-y_8*y_16+y_3*y_20),
2*(y_10*y_15-y_9*y_16+y_4*y_20)},

{2*(-y_4*y_6+y_3*y_7-y_1*y_10),
2*(-y_4*y_12+y_3*y_13-y_1*y_16),
2*(-y_7*y_12+y_6*y_13-y_1*y_19),
-y_10*y_11-y_9*y_12+y_8*y_13-y_7*y_14+y_6*y_15+y_5*y_16+y_4*y_17-
y_3*y_18-y_2*y_19-y_1*y_20,
2*(-y_10*y_12+y_6*y_16-y_3*y_19),
2*(-y_10*y_13+y_7*y_16-y_4*y_19)},

{2*(y_4*y_5-y_2*y_7+y_1*y_9),
2*(y_4*y_11-y_2*y_13+y_1*y_15),
2*(y_7*y_11-y_5*y_13+y_1*y_18),
2*(y_9*y_11-y_5*y_15+y_2*y_18),
 y_10*y_11+y_9*y_12+y_8*y_13-y_7*y_14-y_6*y_15-y_5*y_16+y_4*y_17+
y_3*y_18+y_2*y_19-y_1*y_20,
2*(y_9*y_13-y_7*y_15+y_4*y_18)},

{2*(-y_3*y_5+y_2*y_6-y_1*y_8),
2*(-y_3*y_11+y_2*y_12-y_1*y_14),
2*(-y_6*y_11+y_5*y_12-y_1*y_17),
2*(-y_8*y_11+y_5*y_14-y_2*y_17),
2*(-y_8*y_12+y_6*y_14-y_3*y_17), 
y_10*y_11-y_9*y_12-y_8*y_13+y_7*y_14+y_6*y_15-y_5*y_16-y_4*y_17-
y_3*y_18+y_2*y_19-y_1*y_20}}
\end{verbatim}
\end{prop}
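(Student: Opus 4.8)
The plan is to obtain the matrix of $S_y$ by expanding its definition term by term, once the basis conventions of \cite{kimura} are fixed. First I would fix the ordering $e_1,\dots,e_{20}$ of the monomial basis of $\bigwedge^3\mathbb{C}^6$ used on page $83$ of \cite{kimura}, so that $y=\sum_{k=1}^{20}y_k\,e_k$, together with the induced monomial bases of $\bigwedge^2\mathbb{C}^6$ and $\bigwedge^4\mathbb{C}^6$ and the generator $\tau=u_1\wedge\cdots\wedge u_6$ of $\bigwedge^6\mathbb{C}^6$. With these conventions the three ingredients of $S_y$ become completely explicit: $D_3$ is given on monomials by the stated alternating sum, so $D_3(y)=\sum_{k=1}^{20} y_k\,D_3(e_k)\in\bigwedge^2\mathbb{C}^6\otimes\mathbb{C}^6$; for $\theta=u_j$ the element $u_j\wedge y\in\bigwedge^4\mathbb{C}^6$ is obtained from $y$ by deleting every monomial containing $u_j$ and re-ordering the rest; and $L(z,y)$ is read off by wedging $z$ against each $\bigwedge^2\mathbb{C}^6$-component of $D_3(y)$, keeping the trailing $\mathbb{C}^6$ tensor factor as a coefficient vector and identifying $\bigwedge^6\mathbb{C}^6$ with $\mathbb{C}\,\tau$.

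Concretely, for each $j=1,\dots,6$ I would form $(u_j\wedge y)\otimes 1$, wedge it against $D_3(y)$ in the exterior-algebra factor, collect the coefficient of $\tau$, and read the resulting element of $\mathbb{C}^6$ as the $j$-th column of $S_y$; each entry then comes out as an explicit quadratic form in $y_1,\dots,y_{20}$, which one matches against the $6\times 6$ array displayed in the statement. To keep the combinatorics under control I would organize everything around the splitting $\mathbb{C}^6=\langle u_1,u_2,u_3\rangle\oplus\langle u_4,u_5,u_6\rangle$, which is compatible with $D_3$ and with exterior multiplication, underlies the matrices $Y^{(a)},Y^{(b)}$, and accounts for the block structure visible in the displayed $S_y$. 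I expect the only genuine difficulty to be bookkeeping: getting every Koszul sign right in $D_3$, in the four-fold times two-fold wedge, and in the passage to $\tau$, while staying consistent with the ordering of \cite{kimura}.

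Since the computation is long and sign-sensitive, I would carry it out and then independently re-verify it in Macaulay$2$; this is in any case the natural thing to do, because the same matrix is fed into the cohomology computations later in the section. As a structural check I would exploit the equivariance relation $S_{gy}=\det(g)\,g\,S_y\,g^{-1}$ for $g\in\mathrm{GL}_6$, which follows formally from the $\mathrm{GL}_6$-equivariance of $D_3$ and from $g\cdot\tau=\det(g)\,\tau$, and which both reduces the number of entries that must be computed from scratch and is visibly consistent with the displayed answer. Finally, I would confirm that the matrix above satisfies $S_y^2=\lP(y)\,I_6$ with $\lP$ as in the formula preceding Proposition \ref{kimkim}; this both recovers Proposition \ref{kimkim} and validates the sign conventions, so passing that test is strong evidence that the formula in the statement is correct.
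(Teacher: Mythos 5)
Your proposal is correct and coincides with the paper's own proof, which consists precisely of the two steps you describe: a direct (``trivial though lengthy'') hand computation unwinding the definition of $S_y$, followed by a Macaulay$2$ verification that $S_y^2 = \lP(y)\,I_6$. The $\mathrm{GL}_6$-equivariance sanity check you add is a reasonable supplement but not a different method.
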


\begin{proof}
Trivial (though lengthy) handmade computations. One checks with Macaulay$2$ that $S_y^2 = \lP(y) . I_6$.
\end{proof}

\bigskip

As a consequence of proposition \ref{kimkim}, we have a exact sequence in $\mathbb{P}(\bigwedge^3 \mathbb{C}^6)$:

\begin{equation*}
0 \longrightarrow \mathbb{C}^6 \otimes \OO_{\mathbb{P}(\bigwedge^3 \mathbb{C}^6)}(-2) \stackrel{S_y}\longrightarrow \mathbb{C}^6 \otimes \OO_{\mathbb{P}(\bigwedge^3 \mathbb{C}^6)} \longrightarrow \F \longrightarrow 0,
\end{equation*}
where $\F$ is a sheaf scheme-theoretically supported on $\lQ$. Let $L \subset \mathbb{P}(\bigwedge^3 \mathbb{C}^6)$ be a generic $\mathbb{P}^5$. By restricting the above sequence on $L$, we get a sequence:

\begin{equation*}
0 \longrightarrow \mathbb{C}^6 \otimes \OO_{L}(-2) \stackrel{{S_y}|_{L}}\longrightarrow \mathbb{C}^6 \otimes \OO_{L} \longrightarrow \F|_{L} \longrightarrow 0.
\end{equation*}
The matrix $S_L := {S_y}|_{L}$ has full rank at a generic point of $L$ and drops rank on $\lQ \cap L$. We deduce that $\F_L$ is the push-forward of a pure sheaf (say $F_L$) living on $\lQ \cap L$. Since $\mathrm{deg} \left( \lQ \cap L\right) = 4$ and $\mathrm{deg} \left( \det(S_{L}) \right) = 12$, we find that $\mathrm{rank}(F_L) = 3$.

\begin{rem} \label{remG36}
The above argument shows that $\F$ is the push-forward of a rank $3$ sheaf on $\lQ$ (which we denote by $F)$. Remember that $\lQ \subset \mathbb{P}(\bigwedge^3 \mathbb{C}^6)$ is the projective dual of $\GG(3,(\mathbb{C}^6)^*) \subset \mathbb{P}(\bigwedge^3 (\mathbb{C}^6)^*)$ and we have a diagram:

\begin{equation*}
\xymatrix{ & &  \ar[lldd]_{q} I_{\GG(3,(\mathbb{C}^6)^*)/\mathbb{P}(\bigwedge^3 (\mathbb{C}^6)^*)} \ar[rrdd]^{p} & &  \\
& & & & \\
\GG(3,(\mathbb{C}^6)^*) & & & & \lQ}
\end{equation*}
where $I_{\GG(3,(\mathbb{C}^6)^*)/\mathbb{P}(\bigwedge^3 (\mathbb{C}^6)^*)}$ is the projectivization of the conormal bundle of $\GG(3,(\mathbb{C}^6)^*)$ in $\mathbb{P}(\bigwedge^3 (\mathbb{C}^6)^*)$. It is very likely that $F = p_* q^* \mathcal{Q}(m)|_{L}$, where $\mathcal{Q}(m)$ is an appropriate twist of the quotient bundle on $\GG(3,6)$. See sections $3.3$ and $3.4$ of \cite{maniliev}, where a similar phenomenon is shown to be true for the $E_6$-invariant cubic in $\mathbb{P}^{26}$.
 
\end{rem}

If $L \subset \mathbb{P}(\bigwedge^3 \mathbb{C}^6)$ is a generic $\mathbb{P}^5$ with coordinates $z_1,\ldots,z_6$, we denote by $\lP^{(L)}$ the restriction of $\lP$ to $L$. Let $X_{L}$ be the projective subvariety of $\mathbb{P}(1,1,1,1,1,1,2)$ determined by $\lP^{(L)}(z_1,\ldots,z_6) + x^2 = 0$. This is a quartic double fivefold ramified over a $L \cap \lQ$.
We denote by $W^L$ the weighted homogeneous polynomial $\lP^{(L)}(z_1,\ldots,z_6) + x^2$. A result of Orlov (see Theorem $3.11$ and remark $3.12$ of \cite{orlov1}) shows that there is a semi-orthogonal decomposition:

\begin{equation*}
\DB(X_L) = \langle \DB(\mathrm{Gr} W^L) , \OO_{X_L}, \OO_{X_L}(1), \OO_{X_L}(2), \OO_{X_L}(3) \rangle,
\end{equation*}
where $\DB(\mathrm{Gr} W^L)$ is the derived category of graded matrix factorization of the weighted homogeneous polynomial $W^L$. If $X_L$ was smooth then, as explained in \cite{maniliev2} and \cite{kuz1}, the category $\DB(\mathrm{Gr} W^L)$ would be a Calabi-Yau category of dimension $3$. The singular locus of $\lQ \subset \mathbb{P}(\bigwedge^3 \mathbb{C}^6)$ has dimension $14$. Hence, for any linear space $L \subset \mathbb{P}(\bigwedge^3 \mathbb{C}^6)$ of dimension $5$, the variety $X_L$ is singular. Thus, if $E_L$ is a coherent sheaf in $\DB(\mathrm{Gr} W^L)$ whose jumping locus coincide with the singular locus of $X_L$, then one can not expect that $\mathrm{Ext}^3(E_L,E_L) \simeq \mathrm{Hom}(E_L,E_L)^*$ and $\mathrm{Ext}^2(E_L,E_L) \simeq \mathrm{Ext}^1(E_L,E_L)^*$.

\bigskip

It will be however very helpful to keep this idea in mind when we will extend our study to linear sections of $\sQ \subset \mathbb{P}(\Delta)$. For now, we focus on the cohomological properties of the cokernel of a matrix factorization in $\DB(\mathrm{Gr} W^L)$ constructed from $S_L$. Consider the matrices :

\begin{equation*}
\begin{split}
B_L = S_L + ix.I_6 &  \hspace{1.5cm} C_L = S_L - ix.I_6
\end{split}
\end{equation*}
where $S_L$ is the restriction to $L$ of the matrix $S_y$ defined in proposition \ref{propmatrix} and $I_6$ is the $6*6$ identity. We observe that $B_L \times C_L = C_L \times B_L = W^L .I_6$ and that $B_L$ is weighted homogeneous of degree $2$. We deduce that $(B_L,C_L) \in \DB(\mathrm{Gr} W^L)$. The following is the main technical result of this subsection:

\begin{theo} \label{m1} 
Let $L \subset \mathbb{P}(\bigwedge^3 \mathbb{C}^6)$ be a generic $\mathbb{P}^5$ with coordinates $z_1, \ldots, z_6$. Let $B_L = S_L + ix.I_6$ and $C_L = S_L - ix.I_6$, where $S_L$ is the restriction to $L$ of the matrix $S$ explicited in proposition \ref{propmatrix}. Denote by $E_L$ the restriction to $X_L$ of the cokernel of $B_L$. The coherent sheaf $E_L$ has rank $3$ and we have:
\begin{equation*}
\begin{split}
\mathrm{Hom}_{X_L}(E_L,E_L) = \mathbb{C} & \hspace{0.5cm} \textrm{and} \hspace{0.5cm} \mathrm{Ext}_{X_L}^{1}(E_L,E_L) =0.
\end{split}
\end{equation*}
\end{theo}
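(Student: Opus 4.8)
The plan is to reduce the statement about $E_L$ on the singular fivefold $X_L$ to a computation on the ambient $\mathbb{P}^5 = L$ together with its double cover, and then to carry out the relevant cohomology computation explicitly with Macaulay$2$, exactly in the spirit of the rest of the paper. First I would set up notation: write $\pi : X_L \to L$ for the double cover branched over $L \cap \lQ$, and recall from the exact sequence on $L$ built from $S_L$ that $\F_L = \pi_* F_L$ for a rank $3$ pure sheaf $F_L$ on $L \cap \lQ$; similarly the matrices $B_L = S_L + ix.I_6$ and $C_L = S_L - ix.I_6$ give, on $X_L$, the two-periodic resolution
\begin{equation*}
\cdots \stackrel{C_L}\longrightarrow \OO_{X_L}(-2)^6 \stackrel{B_L}\longrightarrow \OO_{X_L}^6 \stackrel{C_L}\longrightarrow \OO_{X_L}(-2)^6 \stackrel{B_L}\longrightarrow \OO_{X_L}^6 \longrightarrow E_L \longrightarrow 0,
\end{equation*}
so that $E_L$ is a maximal Cohen-Macaulay $\OO_{X_L}$-module with an explicit, periodic, locally free resolution.

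The key step is to compute $\mathrm{Hom}_{X_L}(E_L,E_L)$ and $\mathrm{Ext}^1_{X_L}(E_L,E_L)$ using this periodic resolution. Applying $\HHH_{X_L}(-,E_L)$ to the resolution, the groups $\mathrm{Ext}^i_{X_L}(E_L,E_L)$ for $i \geq 1$ become the cohomology of the two-periodic complex obtained from the transposed maps ${}^tB_L, {}^tC_L$ acting on $H^0$-groups of twists of $E_L$; in particular they are all computed from $H^{\bullet}(X_L, E_L(k))$ for a few values of $k$, and these in turn, by $\pi_* \OO_{X_L} = \OO_L \oplus \OO_L(-2)$ and the projection formula, reduce to cohomology of twists of $\F_L$ (equivalently of $F_L$) on $L = \mathbb{P}^5$. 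So the plan is: (i) express $\mathrm{Hom}$ and $\mathrm{Ext}^1$ of $E_L$ in terms of the cokernel presentation; (ii) push everything down to $\mathbb{P}^5$ and twist; (iii) feed the explicit $6\times 6$ matrix $S_y$ from Proposition \ref{propmatrix}, restricted to a random $\mathbb{P}^5$, into Macaulay$2$ and read off that $\HHH_{X_L}(E_L,E_L) = \mathbb{C}$ and $\mathrm{Ext}^1_{X_L}(E_L,E_L)=0$ for the generic (hence a random rational) choice of $L$. Semicontinuity of $\dim \mathrm{Ext}^i$ then upgrades the result at one explicit $L$ to the generic $L$: one only needs the two numbers to be $1$ and $0$ at a single point, since they cannot drop below $1$ (identity endomorphism) and $0$.

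Concretely, the rank $3$ statement is immediate from the degree count already given in the text ($\deg(L\cap\lQ)=4$, $\deg\det S_L = 12$), so the substance is entirely in the two Ext-computations. I would organize the Macaulay$2$ part so that $E_L$ is presented as $\operatorname{coker}(B_L)$ over the coordinate ring of $X_L$ (the hypersurface $\lP^{(L)}(z)+x^2$ in the weighted ring $\mathbb{C}[z_1,\dots,z_6,x]$ with $\deg x = 2$), compute $\HHH$ and $\mathrm{Ext}^1$ directly, and double-check the rank and the support. The main obstacle I anticipate is not conceptual but arranging the computation so it actually terminates and is trustworthy: $X_L$ is singular (its singular locus meets everything, since $\operatorname{Sing}\lQ$ has dimension $14$), so $E_L$ is not locally free and one must be careful that the Ext groups are finite-dimensional and correctly computed over the graded ring rather than accidentally over the ambient polynomial ring; one should also verify that the chosen random $L$ is generic enough that $L \cap \lQ$ is a smooth quartic fourfold and that the ``jumping locus'' of $E_L$ really is $\operatorname{Sing} X_L$, so that the genericity hypotheses of the later sections are met. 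Once the single explicit computation is in hand, semicontinuity finishes the proof with no further work.
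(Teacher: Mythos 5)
Your proposal is essentially the same as the paper's: the substance of the theorem is a Macaulay$2$ computation of $\HHH_{X_L}(E_L,E_L)$ and $\mathrm{Ext}^1_{X_L}(E_L,E_L)$ at a single random $L$, with semicontinuity of $\dim\mathrm{Ext}^i$ upgrading the answer to generic $L$ (a step the paper leaves implicit). The only organizational difference is that the paper does not bother pushing down along $\pi_*\OO_{X_L} = \OO_L \oplus \OO_L(-2)$ to $\mathbb{P}^5$; it builds the weighted coordinate ring $T$ of $\mathbb{P}(1,\dots,1,2)$, quotients by the hypersurface ideal to get the coordinate ring of $X_L$, forms $E_L$ as the sheafification of $\operatorname{coker} B_L$ on $\mathrm{Proj}$ of that quotient, and calls $\mathrm{Hom}$ and $\mathrm{Ext}^1$ directly, which already addresses the ``over which ring'' concern you raised.
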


\begin{proof}
The sheaf $E_L$ has rank $3$ since it is the cokernel of $6*6$ homogeneous matrix with quadratic entries whose degenerating locus is a quartic hypersurface in $\mathbb{P}(1,\ldots,1,2)$. We provide a Macaulay2 code to prove that $\mathrm{Hom}(E_L,E_L) = \mathbb{C}$ and that $\mathrm{Ext}^1(E_L,E_L) = 0$.

\bigskip

We first recall the expression of the matrix $S_y$ we found in proposition \ref{propmatrix} defined over the ring $\mathbb{Z}/313.\mathbb{Z}[i,y_1,\ldots,y_{20},z_1,\ldots,z_6,x]$, where $i$ is the square root of $-1$, $y_1,\ldots, y_{20}$ and $z_1,\ldots,z_6$ have degree $1$ and $x$ has degree $2$.

\begin{verbatim}
i_1 : kk = ZZ/313

i_2 : VV = kk[y_1..y_20]

i_3 : R = kk[w,z_1,z_2,z_3,z_4,z_5,z_6,x,Degrees=>{0,1,1,1,1,1,1,2}]

i_4 : J = ideal(w^2+1)

i_5 : T = R/J

i_6 : U = VV**T

i_7 : S_y = ...
\end{verbatim}
We then create a random $6*20$ matrix with integer coefficients. This matrix represents the equations defining $L$. We then create a matrix (denoted $K1$) which is the restriction of $S_y$ to $L$. For technical reasons while working with Macaulay$2$, we must first define it over $\mathbb{Z}/313.\mathbb{Z}[i,y_1,\ldots, y_{20},z_1,\ldots,z_6,x]$ and then replicate it over $\mathbb{Z}/313.\mathbb{Z}[i,z_1,\ldots,z_6,x]$.
\begin{verbatim}
i_8 : L = random(ZZ^20,ZZ^6,Height=>50)

i_9 : L2 = L**U

i_10 : L3 = L2 * matrix{{z_1},{z_2},{z_3},{z_4},{z_5},{z_6}}

i_11 : K = mutableMatrix(S_y)

i_12 : for k from 0 to 5 do (for i from 0 to 5 do 
(for j from 1 to 20 do K_(k,i) = sub(K_(k,i), y_j => L3_(j-1,0))))

i_13 : K = matrix(K)

i_14 : K1 = mutableMatrix(T,6,6)

i_15 : for k from 0 to 5 do (for j from 0 to 5 do K1_(k,j) = sub(K_(k,j),T))

i_16 : K1 = matrix(K1)
\end{verbatim}

We finally create the matrix $B_L$ and its cokernel $E_L$. We finally compute $\mathrm{Hom}(E_L,E_L)$ and $\mathrm{Ext}^1(E_L,E_L)$.

\begin{verbatim}
i_17 :use T

i_18 : A = matrix{{w*x,0,0,0,0,0},{0,w*x,0,0,0,0},{0,0,w*x,0,0,0},
{0,0,0,w*x,0,0},{0,0,0,0,w*x,0},{0,0,0,0,0,w*x}}

i_19 : B =  K1 + A

i_20 : C = K1-A

i_21 : D = B*C

i_22 : P = D_(0,0)

i_23 : I = ideal(P)

i_24 : T1 = T/I

i_25 : BL = B**T1

i_26 : FL = coker BL

i_27 : X = Proj T1

i_28 : EL = sheaf FL

i_29 : Hom(EL,EL)

i_30 : Ext^1(EL,EL)
\end{verbatim}
In about one hour and a half on a portable workstation, Macaulay$2$ gives the expected answer:

\begin{verbatim}
o_29 : kk^1

o_30 : 0
\end{verbatim}
\end{proof}
\begin{rem}
The vanishing of the $\mathrm{Ext}^1$ for the matrix factorization $B_L$ is somehow quite remarkable. Indeed, one computes with Macaulay$2$ that $\mathrm{Ext}_{\lQ \cap L}^1(F_L,F_L) = \mathbb{C}^{21}$. Hence, going to the double cover of $\mathbb{P}^{5}$ ramified over $\lQ \cap L$ kills the $21$ deformation directions of the cokernel of the initial matrix factorization. We have no explanation for this phenomenon.
\end{rem}

\subsection{Matrix factorizations on quartic double fivefolds ramified over linear sections of $\sQ \subset \mathbb{P}(\Delta)$}

The goal of this subsection is to prove the existence of rank $6$ spherical vector bundles on quartic double fivefolds ramified over general linear sections of $\sQ \subset \mathbb{P}(\Delta)$. Since we proved in the first section of this paper that a general quartic in $\mathbb{P}^5$ is a linear section of $\sQ \subset \mathbb{P}(\Delta)$, this implies that our results hold for a generic quartic double fivefold.

We will use the vanishing result we obtained in section $3.1$ for the rank 3 sheaves $E_L$ which are defined on quartic double fivefolds ramified over general linear sections of $\lQ \subset \mathbb{P}(\bigwedge^3 \mathbb{C}^6)$ and a representation theoretic reduction to go from quartic double fivefolds ramified over linear sections of $\lQ$ to quartic double fivefolds ramified over general linear sections of $\sQ$. We start with a representation theoretic description of the matrix factorization we exhibited in the previous subsection.

\bigskip

Let $\frak{g}$ be a simple Lie algebra over $\mathbb{C}$ and denote by $\beta$ the highest root of $\frak{g}$ (we have chosen a fixed Cartan subalgebra of $\frak{g}$). We say that $\frak{g}$ has a type $\mathrm{II}$ decomposition if there exists a graded decomposition of $\frak{g}$ :

\begin{equation*}
\frak{g} = \frak{g}_{-2} \oplus \frak{g}_{-1} \oplus \frak{g}_{0} \oplus \frak{g}_{1} \oplus \frak{g}_{2},
\end{equation*}
such that $\left[ \frak{g}_i, \frak{g}_k \right] \subset \frak{g}_{i+k}$, $\frak{g}_{-2} = \mathbb{C}.X_{- \beta}$ and $\frak{g}_{2} = \mathbb{C}.X_{\beta}$. If such a decomposition happens, Vinberg \cite{vinberg} proves that $\frak{g}_1$ is a prehomogeneous space under the restricted adjoint action of $G_0$, where $G_0$ is a simply connected group algebraic group with Lie algebra $\frak{g}_0$.

\bigskip

Let $x \in \frak{g}_1$ and let $\mathrm{ad}_x = \left[x,. \right]$ be the adjoint operator associated to $x$. By the grading property, we know that $\ad_x$ is of degree $1$, $(\ad_x)^2$ is of degree $2$ and $(\ad_x)^4$ is of degree $4$. The operator $(\ad_x)^4$ can been then seen as a map:

\begin{equation*}
(\ad_x)^4 : \frak{g}_{-2} \longrightarrow \frak{g}_2.
\end{equation*}
Since $\frak{g}_{-2} = \mathbb{C}.X_{- \beta}$ and $\frak{g}_{2} = \mathbb{C}.X_{\beta}$, we have $(\ad_x)^4(X_{-\beta}) = P(x).X_{\beta}$, where $P$ is a $G_0$-invariant polynomial. We say that $\frak{g}$ has \textit{maximal rank $4$} if the polynomial $P$ is non-identically zero (see \cite{GW1}). In this case, since $\ad_x$ is linear in $x$, we deduce that $P$ must have degree $4$. The polynomial $P$ is thus a degree $4$ relative invariant for the prehomogeneous space $(G_0, \frak{g}_1)$. The maximal rank $4$ representation have been tabulated in \cite{GW1} and they are the following:
\bigskip

\begin{center}
\begin{tabular}{|ccc|ccccc|ccccc|ccccc|}
\hline
& &  & & & & & & & & & & & & & & & \\
 & $\frak{g}$&  & & &$G_0$ & & & & & $\frak{g_1}$ & & & & & $\dim \frak{g_1}$ &&  \\
& &  & & & & & & & & & & & & & & & \\
\hline
& &  & & & & & & & & & & & & & & & \\
 & $B$&  & & &$ \mathrm{SL}_2 \times \mathbb{C}^* \times \mathrm{SO}_d$ & & & & & $\mathbb{C}^2 \otimes \mathbb{C}^d$ & & & & & $2d \,\,(d \,\, \textrm{odd})$ &&  \\
& &  & & & & & & & & & & & & & & & \\
\hline
& &  & & & & & & & & & & & & & & & \\
 & $D$&  & & &$ \mathrm{SL}_2 \times \mathbb{C}^* \times \mathrm{SO}_d$ & & & & & $\mathbb{C}^2 \otimes \mathbb{C}^d$ & & & & & $2d \,\,(d \,\, \textrm{even})$ &&  \\
& &  & & & & & & & & & & & & & & & \\
\hline
& &  & & & & & & & & & & & & & & & \\
 & $F_4$&  & & &$\mathbb{C}^* \times \mathrm{Sp}_6$ & & & & & $(\bigwedge^3 \mathbb{C}^6)_0$ & & & & & $14$ &&  \\
& &  & & & & & & & & & & & & & & & \\
\hline
& &  & & & & & & & & & & & & & & & \\
 & $E_6$&  & & &$\mathbb{C}^* \times \mathrm{SL}_6$ & & & & & $\bigwedge^3 \mathbb{C}^6$ & & & & & $20$ &&  \\
& &  & & & & & & & & & & & & & & & \\
\hline
& &  & & & & & & & & & & & & & & & \\
 & $E_7$&  & & &$\mathbb{C}^* \times \mathrm{Spin}_{12}$ & & & & & $\Delta$ & & & & & $32$ &&  \\
& &  & & & & & & & & & & & & & & & \\
\hline
& &  & & & & & & & & & & & & & & & \\
 & $E_8$&  & & &$\mathbb{C}^* \times E_7$ & & & & & $\textrm{minuscule}$ & & & & & $56$ &&  \\
& &  & & & & & & & & & & & & & & & \\
\hline
\end{tabular}
\end{center}

We focus on the type $\mathrm{II}$ decomposition 
\begin{equation*}
\frak{e}_6 = \mathbb{C} \oplus (\bigwedge^3 \mathbb{C}^6)^* \oplus \frak{gl}_6 \oplus \bigwedge^3 \mathbb{C}^6 \oplus \mathbb{C}.
\end{equation*}

For $y \in \bigwedge^3 \mathbb{C}^6$, the graded decomposition above shows that $(\ad_y)^2 : \frak{gl}_6 \longrightarrow \mathbb{C}$. Hence, we can identify $(\ad_y)^2$ as a $6 \times 6$ matrix. Furthermore, we have the following:

\begin{lem} \label{matfacto}
The pair $((\ad_y)^2, (\ad_y)^2)$ is a matrix factorization of $\lP$.
\end{lem}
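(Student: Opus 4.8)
The plan is to exploit the type $\mathrm{II}$ grading structure directly: I want to show that for $y \in \bigwedge^3 \mathbb{C}^6$, the operator $M_y := (\ad_y)^2 \in \frak{gl}_6$ (viewed via the identifications coming from the grading) satisfies $M_y^2 = \lP(y) \cdot I_6$. Since a pair $(A,A)$ of $6\times 6$ matrices with polynomial entries is by definition a matrix factorization of a quartic $Q$ exactly when $A^2 = Q \cdot I_6$, this identity is precisely the assertion of the lemma. So everything reduces to computing the square of $(\ad_y)^2$ as an endomorphism of the degree-zero piece $\frak{gl}_6$.

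First I would set up the bookkeeping of the grading carefully. Write $\frak{g}_{-2} = \mathbb{C}.X_{-\beta}$, $\frak{g}_2 = \mathbb{C}.X_\beta$, $\frak{g}_1 = \bigwedge^3 \mathbb{C}^6$, $\frak{g}_{-1} = (\bigwedge^3 \mathbb{C}^6)^*$, $\frak{g}_0 = \frak{gl}_6$. For $y \in \frak{g}_1$, the bracket $\ad_y$ raises degree by $1$, so $(\ad_y)^2 : \frak{g}_{-2} \to \frak{g}_0$ and $(\ad_y)^2 : \frak{g}_{-1} \to \frak{g}_1$, etc.; in particular $(\ad_y)^4 : \frak{g}_{-2} \to \frak{g}_2$, and by the maximal rank $4$ property recalled above we already know $(\ad_y)^4(X_{-\beta}) = \lP(y) X_\beta$, where the normalization of $\lP$ is fixed by this equation. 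The key point to nail down is how to read $(\ad_y)^2$ as an element of $\frak{gl}_6$: the natural route is to use the nondegenerate Killing-type pairing between $\frak{g}_{-2}$ and $\frak{g}_2$ (which is one-dimensional, hence just picks out the coefficient of $X_\beta$) together with the pairing between $\frak{g}_0$ and itself. Concretely, $(\ad_y)^2 X_{-\beta} \in \frak{g}_0 \cong \frak{gl}_6$ gives one matrix, and one must check that applying $(\ad_y)^2$ again and pairing against $X_\beta$ reproduces the square of that matrix in $\frak{gl}_6$. This is where the structure of the $\frak{sl}_2$-triple through $X_{-\beta}, X_\beta$ and the "rank" element $h_\beta \in \frak{g}_0$ enters: one shows $[(\ad_y)^2 X_{-\beta}, \,\cdot\,]$ acting suitably and the compatibility of the $\frak{g}_0$-action with the $\frak{sl}_2$-action force $(\ad_y)^2 \circ (\ad_y)^2$ on $\frak{g}_{-2}$ to factor as matrix multiplication in $\frak{gl}_6$ of the single matrix $M_y := (\ad_y)^2 X_{-\beta}$ with itself. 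Then $(\ad_y)^4 X_{-\beta} = M_y^2 \cdot X_\beta$ (up to the identification), and comparing with $(\ad_y)^4(X_{-\beta}) = \lP(y) X_\beta$ yields $M_y^2 = \lP(y) I_6$.

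The cleanest way to carry this out is probably to invoke the general theory of type $\mathrm{II}$ (quaternionic) gradings: the triple $(X_{-\beta}, h_\beta, X_\beta)$ is a principal $\frak{sl}_2$ for the grading, $\frak{g}_{\pm 2}$ are the extreme weight spaces, and there is a standard identity (essentially the one making $(\ad_y)^2|_{\frak{g}_{-1} \to \frak{g}_1}$ and its transpose a symmetric pair) expressing iterated brackets of $y$ in terms of the moment-map quadratic $\frak{g}_1 \to \frak{g}_0$, $y \mapsto (\ad_y)^2 X_{-\beta}$. I expect the main obstacle to be exactly this identification step — making precise the sense in which "$(\ad_y)^2$ is a $6\times 6$ matrix" and that composing these degree-$2$ operators corresponds to multiplying the associated matrices — rather than any hard computation; once the dictionary between the grading data and $\frak{gl}_6$ is fixed, the quartic identity $M_y^2 = \lP(y) I_6$ drops out of $(\ad_y)^4(X_{-\beta}) = \lP(y) X_\beta$. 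Alternatively, if one prefers to avoid the abstract argument, one can simply use Kimura--Sato's explicit $S_y$ from Proposition \ref{kimkim}: it is known (see \cite{kimura}) that $S_y$ \emph{is} the matrix of $(\ad_y)^2$ in a suitable basis, so $M_y^2 = S_y^2 = \lP(y) I_6$ follows from Proposition \ref{kimkim} directly, and Laurent Manivel's suggested argument presumably proceeds along one of these two lines.
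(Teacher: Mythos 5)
Your central route — read off the matrix equation $M_y^2 = \lP(y) I_6$ directly from the grading identity $(\ad_y)^4(X_{-\beta}) = \lP(y) X_\beta$ — has a genuine gap. The step where you assert ``$(\ad_y)^4 X_{-\beta} = M_y^2\cdot X_\beta$ (up to the identification)'' does not type-check: the iterated bracket lands in the one-dimensional space $\frak{g}_2$, whereas $M_y^2$ is a priori an arbitrary element of $\frak{gl}_6$, so the product $M_y^2\cdot X_\beta$ is only meaningful once you already know $M_y^2$ is a scalar, which is what you are trying to prove. What the grading plus the Killing pairing actually gives you is a \emph{scalar} identity of the form $\kappa(M_y,(\ad_y)^2 X_{-\beta}) \propto \lP(y)$, i.e.\ (after using the self-duality of $\frak{g}_0$) something like $\mathrm{tr}(M_y^2)\propto \lP(y)$. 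This is strictly weaker than $M_y^2 = \lP(y) I_6$; nothing in the $\frak{sl}_2$-triple bookkeeping by itself excludes a traceless, non-scalar contribution to $M_y^2$.

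The paper closes exactly this gap with an equivariance argument you do not use. It regards $y\mapsto (\ad_y)^2\circ(\ad_y)^2$ as an $\mathrm{SL}_6$-equivariant polynomial map $\bigwedge^3\mathbb{C}^6\to\mathrm{End}(\mathbb{C}^6)$ of degree $4$, polarizes it to an equivariant linear map $S^4(\bigwedge^3\mathbb{C}^6)\to\mathrm{End}(\mathbb{C}^6)$, and then decomposes both sides into $\mathrm{SL}_6$-irreducibles (via \textsf{LiE}). The decisive point is that the adjoint representation $[1,0,0,0,1]$ does \emph{not} occur in $S^4(\bigwedge^3\mathbb{C}^6)$, while $\mathrm{End}(\mathbb{C}^6) = [1,0,0,0,1]\oplus[0,0,0,0,0]$; by Schur's lemma the only possible equivariant map therefore sends everything to scalar matrices, and the unique trivial summand of $S^4(\bigwedge^3\mathbb{C}^6)$ is $\mathbb{C}\cdot\lP$. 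Nonvanishing of $(\ad_y)^4$ pins down the nonzero constant. This is precisely the ``additional input'' your argument is missing: the obstruction to a traceless component in $M_y^2$ is killed by the absence of $[1,0,0,0,1]$ in $S^4(\bigwedge^3\mathbb{C}^6)$, not by the grading structure alone.

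Your fallback suggestion — identify $(\ad_y)^2$ with Kimura--Sato's $S_y$ and then invoke Proposition \ref{kimkim} — is logically viable, but note that the identification $(\ad_y)^2 = S_y$ is not something one can simply cite: in the paper it is the content of the separate Proposition \ref{identification}, proved using \cite{Hitchin} and \cite{slupistan}, and it appears \emph{after} Lemma \ref{matfacto} rather than being used to prove it. So that route would reorganize the logic of the section rather than shorten the proof.
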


\begin{proof}
Consider the $\mathrm{SL}_6$-equivariant map:

\begin{equation*}
\begin{split}
\Pi_{\mathrm{SL}_6} : & \,\,\,\,  \bigwedge^3 \mathbb{C}^6 \longrightarrow \mathrm{End}(\mathbb{C}^6)\\
&\,\,\,\, y \longrightarrow (\ad_y)^2 \circ (\ad_y)^2.
\end{split}
\end{equation*}
Since $(\ad_y)^4 \neq 0$, we know that the map $\Pi$ is not identically zero. The map $\Pi$ is polynomial of degree $4$, so that we get a non-zero $\mathrm{SL}_{6}$-equivariant linear map which lifts $\Pi_{\mathrm{SL}_6}$:

\begin{equation*}
\begin{split}
\widetilde{\Pi_{\mathrm{SL}_6}} : & \,\,\,\,  \mathrm{S}^4(\bigwedge^3 \mathbb{C}^6) \longrightarrow \mathrm{End}(\mathbb{C}^6)\\
\end{split}
\end{equation*}

Furthermore, a computation with $\mathrm{Lie}$ \cite{Lie} shows that the decomposition into $\mathrm{SL}_6$ irreducible representations of $\mathrm{S}^4 (\bigwedge^3 \mathbb{C}^6)$ is :

\begin{equation*}
\mathrm{S}^4 (\bigwedge^3 \mathbb{C}^6) = [0,0,4,0,0] + [1,0,2,0,1] + [2,0,0,0,2] + [0,0,2,0,0] + [0,1,0,1,0] + [0,0,0,0,0],
\end{equation*}
 where $[0,0,0,0,0]$ represents $\lP.\mathbb{C}$. On the other hand, the decomposition of $\mathrm{End}(\mathbb{C}^6)$ into $\mathrm{SL}_6$ irreducible representations is:

\begin{equation*}
\mathrm{End}(\mathbb{C}^6) = [1,0,0,0,1] + [0,0,0,0,0],
\end{equation*}
where $[0,0,0,0,0]$ is $\mathrm{id}_{\mathbb{C}^6}.\mathbb{C}$. By Schur's lemma, we deduce that $\widetilde{\Pi_{\mathrm{SL}_6}} = c.\lP \otimes \mathrm{id}_{\mathbb{C}^6}$, with $c$ a non-zero scalar. Thus, we find that:

\begin{equation*}
(\ad_y)^2 \circ (\ad_y)^2 = c.\lP.\mathrm{id}_{\mathbb{C}^6},
\end{equation*}
with $c \neq 0$. This proves the lemma.
\end{proof}

We will identify this matrix factorization with the one exhibited in the previous section.

\begin{prop} \label{identification}
For all $y \in \bigwedge^3 \mathbb{C}^6$, we have:
$$(\ad_y)^2 = S_y,$$
where $S_y$ is the endomorphism of $\mathbb{C}^6$ defined in proposition \ref{kimkim}
\end{prop}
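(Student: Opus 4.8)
The plan is to upgrade the Schur-lemma argument of Lemma~\ref{matfacto} from an \emph{equality up to scalar} to an \emph{honest identity of matrices}, and then to pin down the remaining scalar. Both $y\mapsto(\ad_y)^2$ and $y\mapsto S_y$ are $\mathrm{SL}_6$-equivariant polynomial maps $\bigwedge^3\mathbb{C}^6\to\mathrm{End}(\mathbb{C}^6)$ of degree $2$, so I first want to understand the space of such maps. By polarization, degree-$2$ equivariant maps correspond to $\mathrm{SL}_6$-equivariant linear maps $\mathrm{S}^2(\bigwedge^3\mathbb{C}^6)\to\mathrm{End}(\mathbb{C}^6)$. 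A computation with \texttt{Lie} (entirely analogous to the one already invoked in the proof of Lemma~\ref{matfacto}) gives the decomposition of $\mathrm{S}^2(\bigwedge^3\mathbb{C}^6)$ into irreducibles; one checks that the only constituent it shares with $\mathrm{End}(\mathbb{C}^6)=[1,0,0,0,1]+[0,0,0,0,0]$ is $[1,0,0,0,1]$, and that it occurs with multiplicity one in $\mathrm{S}^2(\bigwedge^3\mathbb{C}^6)$ (indeed $\mathbb{C}=[0,0,0,0,0]$ cannot appear, as there is no $\mathrm{SL}_6$-invariant quadratic form on $\bigwedge^3\mathbb{C}^6$). Hence the space of $\mathrm{SL}_6$-equivariant quadratic maps $\bigwedge^3\mathbb{C}^6\to\mathrm{End}(\mathbb{C}^6)$ is at most one-dimensional, so $(\ad_y)^2=\lambda\, S_y$ for a single scalar $\lambda$ independent of $y$, provided both maps are nonzero — which they are, since $(\ad_y)^4\neq 0$ forces $(\ad_y)^2\neq 0$, and $S_y^2=\lP(y)I_6\not\equiv 0$ forces $S_y\not\equiv 0$.

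It then remains to show $\lambda=1$ (after an appropriate, explicit normalization of the generators $X_{-\beta},X_\beta$ and of the isomorphism $\frak{g}_0/\mathbb{C}\simeq\frak{sl}_6$, which I will fix once and for all so that the statement is literally true; alternatively one simply absorbs $\lambda$ into the choice of $X_\beta$). Squaring the relation gives $(\ad_y)^2\circ(\ad_y)^2=\lambda^2 S_y^2=\lambda^2\lP(y)I_6$; on the other hand the type-$\mathrm{II}$ grading identity $(\ad_y)^4(X_{-\beta})=\lP(y)X_\beta$ together with the standard $\frak{sl}_2$-triple $(X_{-\beta},H_\beta,X_\beta)$ computation (using that $\ad_y$ maps $\frak{g}_{-2}$ to $\frak{g}_{-1}$, $\frak{g}_{-1}$ to $\frak{g}_0$, etc., and tracking the action through these one-dimensional and small pieces) identifies $(\ad_y)^2\circ(\ad_y)^2$ restricted to the $\frak{g}_0$-part with $\lP(y)$ times the identity on the relevant $6$-dimensional space, \emph{on the nose} with the normalization of $\lP$ coming from Kimura--Sato's formula. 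Matching the two constants forces $\lambda^2=1$; a single evaluation at a convenient $y$ — e.g. $y=u_1\wedge u_2\wedge u_3$, for which both $(\ad_y)^2$ and $S_y$ are easy to compute explicitly (the latter is read off from the matrix in Proposition~\ref{propmatrix}, the former from the bracket relations of $\frak{e}_6$ in the grading, cf.~\cite{kimura}) — then selects $\lambda=+1$.

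Concretely, the steps in order are: (1) run \texttt{Lie} to get $\mathrm{S}^2(\bigwedge^3\mathbb{C}^6)=[1,0,0,0,1]+[0,0,0,1,0]+[0,2,0,0,0]+\dots$ (the precise list is what the computer returns; the only point needed is multiplicity-one for $[1,0,0,0,1]$ and absence of the trivial rep); (2) invoke Schur to conclude the space of equivariant quadratic maps to $\mathrm{End}(\mathbb{C}^6)$ is one-dimensional, hence $(\ad_y)^2=\lambda S_y$; (3) square and use Lemma~\ref{matfacto} (or re-derive via the $\frak{sl}_2$-triple) to get $\lambda^2=1$ with the Kimura--Sato normalization of $\lP$; (4) evaluate at $y=u_1\wedge u_2\wedge u_3$ to fix the sign.

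The main obstacle is bookkeeping rather than conceptual: one must be scrupulous about which normalizations are being used — the choice of Chevalley generators $X_{\pm\beta}$ of $\frak{e}_6$, the identification of the degree-$0$ piece with $\frak{gl}_6$ (and of its semisimple part with $\frak{sl}_6$ acting on $\mathbb{C}^6$ versus $(\mathbb{C}^6)^*$), and the precise scalar in front of $\lP$ in the Kimura--Sato formula of Proposition~\ref{propmatrix}. A sloppy choice at any of these points changes $\lambda$ by a nonzero scalar and makes the clean equality $(\ad_y)^2=S_y$ false as stated; the honest way to handle this is to declare at the outset that the generators and identifications are chosen so as to make step~(4) come out to $\lambda=1$, which is harmless since all that is ultimately used downstream is that $(\ad_y)^2$ \emph{is} (a fixed representative of) the Kimura--Sato matrix factorization, so that the Macaulay2 computations of Theorem~\ref{m1} apply verbatim to the cokernel of $(\ad_z^{\frak{e}_7})^2+ix\cdot I_{12}$ after the representation-theoretic reduction of the next subsection. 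The verification $(\ad_y)^2=S_y$ at the single point $y=u_1\wedge u_2\wedge u_3$ is then a short, if slightly tedious, hand computation, which one can also cross-check in Macaulay2 against the explicit matrix of Proposition~\ref{propmatrix}.
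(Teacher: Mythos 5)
Your proposal is correct in spirit but takes a genuinely different route from the paper. The paper's proof is a two-line direct identification: it quotes an explicit closed formula for $S_y(\theta)$ from Hitchin (section 2 of \cite{Hitchin}), namely $\tau\otimes S_y(\theta)=A(\iota(\theta,y)\wedge y)$, and then quotes an explicit formula for $(\ad_y)^2(\theta)$ from Slupinski–Stanton (remark 2.17 of \cite{slupistan}) which is literally the same expression, so the two operators coincide with no scalar to pin down. Your route replaces this by a multiplicity-one argument: you observe that both $y\mapsto(\ad_y)^2$ and $y\mapsto S_y$ are $\mathrm{SL}_6$-equivariant quadratic maps $\bigwedge^3\mathbb{C}^6\to\mathrm{End}(\mathbb{C}^6)$, that the space of such maps is one-dimensional because $[1,0,0,0,1]$ appears with multiplicity one in $\mathrm{S}^2(\bigwedge^3\mathbb{C}^6)$ and the trivial representation does not appear (correct — the invariant pairing on $\bigwedge^3\mathbb{C}^6$ is alternating, not symmetric; the actual decomposition is $\mathrm{S}^2(\bigwedge^3\mathbb{C}^6)=[0,0,2,0,0]\oplus[1,0,0,0,1]$, dimensions $175+35=210$, not the tentative list you wrote, but you flag it as a placeholder), and then you fix the proportionality constant by squaring and evaluating at a point. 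What your approach buys is independence from the two literature citations; what it costs is the normalization bookkeeping you discuss at length, since Lemma~\ref{matfacto} only gives $(\ad_y)^2\circ(\ad_y)^2=c\cdot\lP\cdot I_6$ for some nonzero $c$ and you have to normalize $X_{\pm\beta}$ and the $\frak{gl}_6$-identification so that $c=1$ before you can conclude $\lambda^2=1$. You are honest that this is a convention, and you correctly observe that nothing downstream depends on the precise scalar since a nonzero multiple of $S_y$ has the same cokernel; the paper's citation of closed-form identities sidesteps that discussion entirely.
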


\begin{proof}
Following \cite{Hitchin}, section 2, one observes that for all $y \in \bigwedge^3 \mathbb{C}^6 \simeq (\bigwedge^3 \mathbb{C}^6)^* $ and for all $\theta \in \mathbb{C}^6$, we have:
\begin{equation*}
\tau \otimes S_y(\theta) = A\left(\iota(\theta,y) \wedge y \right), 
\end{equation*}
where $\iota : \mathbb{C}^6 \times (\bigwedge^3 \mathbb{C}^6)^* \longrightarrow (\bigwedge^2 \mathbb{C}^6)^*$ is the interior product and $A$ is the identification $(\bigwedge^5 \mathbb{C}^6)^* \simeq \tau \otimes \mathbb{C}^6$ given by the volume form $\tau$. Furthermore, following remark $2.17$ of \cite{slupistan}, we notice that $y \in \bigwedge^3 \mathbb{C}^6 $ and for all $\theta \in \mathbb{C}^6$, we have:

\begin{equation*}
(\ad_y)^2 (\theta) = A\left(\iota(\theta,y) \wedge y \right).
\end{equation*}
This concludes the proof of the proposition.
\end{proof}

We are now in position to prove the main result of this section:

\begin{theo} \label{mainmain}
The generic quartic double fivefold is endowed with a spherical rank $6$ vector bundle.
\end{theo}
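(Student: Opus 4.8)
The plan is to mirror the construction of Theorem \ref{m1} one level up in the magic chain, exploiting the type $\mathrm{II}$ decomposition (\ref{equa00}) of $\frak{e_7}$ and the embedding $\frak{e_6} \hookrightarrow \frak{e_7}$ that is compatible with the respective gradings. Concretely, by the same Schur-lemma argument used in Lemma \ref{matfacto} (applied now to $\Sp_{12}$, resp.\ $\Spin_{12}$, acting on $\Delta$ and on $\mathrm{End}(\mathbb{C}^{12})$), the pair $((\ad_z^{\mathfrak{e_7}})^2, (\ad_z^{\mathfrak{e_7}})^2)$ is a matrix factorization of $\sP$; this is the content of the Lemma \ref{matfacto2} alluded to in the acknowledgements. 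Hence $\tilde B = (\ad_z^{\mathfrak{e_7}})^2 + ix.I_{12}$ and $\tilde C = (\ad_z^{\mathfrak{e_7}})^2 - ix.I_{12}$ satisfy $\tilde B \tilde C = \tilde C \tilde B = (\sP(z) + x^2).I_{12}$ and are weighted homogeneous of degree $2$, so for a generic $\PP^5 = L \subset \PP(\Delta)$ their restrictions define an object of $\DB(\mathrm{Gr}\, W^L)$ whose cokernel, restricted to $X_L$, is a coherent sheaf $\tilde E_L$. A degree count as in the paragraph before Remark \ref{remG36} (a $12\times 12$ matrix with quadratic entries, degenerating on a quartic) shows $\mathrm{rank}(\tilde E_L) = 6$; one must also check $\tilde E_L$ is locally free, which follows because over the generic $X_L$ the singular locus of $\sQ$ (dimension $24$, codimension $7$ in $\PP^{31}$) misses $L$, so $X_L$ is smooth and a maximal Cohen–Macaulay module over a regular local ring of the relevant dimension is free.

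The heart of the proof is to show $\mathrm{Hom}_{X_L}(\tilde E_L, \tilde E_L) = \CC$ and $\mathrm{Ext}^1_{X_L}(\tilde E_L, \tilde E_L) = 0$; since $X_L$ is smooth and $W^L$ defines a CY-$3$ category, Serre duality in $\DB(\mathrm{Gr}\, W^L)$ then gives $\mathrm{Ext}^3 = \CC$ and $\mathrm{Ext}^2 = 0$ for free, so an appropriate twist of $\tilde E_L$ is spherical. To get the $\mathrm{Hom}$ and $\mathrm{Ext}^1$ vanishing I would not attempt a direct Macaulay$2$ computation on $\PP(\Delta)$ (the $12\times 12$ matrix over $32$ variables is far too large), but instead run the representation-theoretic reduction sketched in the introduction: choose a generic $\PP^5 = L_0 \subset \PP(\bigwedge^3 \CC^6) \subset \PP(\Delta)$ sitting inside the $\frak{gl}_6$-graded piece, use Proposition \ref{identification} to identify $(\ad_y^{\mathfrak{e_6}})^2$ with $S_y$, and analyze how $(\ad_z^{\mathfrak{e_7}})^2$ restricts along the inclusion $\bigwedge^3\CC^6 \hookrightarrow \Delta \simeq \CC \oplus \bigwedge^2\CC^6 \oplus \bigwedge^4\CC^6 \oplus \bigwedge^6\CC^6$. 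Using Kimura's explicit formulas \cite{kimura} for the $\frak{e_7}$-action, $\tilde B|_{L_0}$ should decompose (after a constant change of basis) into a block built out of $B_{L_0} = S_{L_0} + ix.I_6$ and a second block, so that $\tilde E_{L_0}$ is an extension of (twists of) $E_{L_0}$ and a companion sheaf; its $\mathrm{Hom}$ and $\mathrm{Ext}^1$ are then controlled by those of $E_{L_0}$, which vanish (up to $\mathrm{Hom} = \CC$) by Theorem \ref{m1}.

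With the special fivefold $L_0$ in hand, I would then invoke semicontinuity: the family of sheaves $\{\tilde E_L\}$ over the (irreducible) parameter space of pairs $(L, z)$ has $\dim \mathrm{Hom}_{X_L}(\tilde E_L,\tilde E_L)$ and $\dim \mathrm{Ext}^1_{X_L}(\tilde E_L,\tilde E_L)$ upper semicontinuous in $L$, so the equalities $\mathrm{Hom} = \CC$, $\mathrm{Ext}^1 = 0$, having been verified at $L_0$, persist on a dense open subset. Combined with Theorem \ref{section} (a generic quartic in $\PP^5$ is a linear section of $\sQ$ in finitely many ways), this transfers the conclusion to the CY-$3$ category of a generic quartic double fivefold, after the twist by $\OO(m)$ needed to land inside $\DB(\mathrm{Gr}\, W^L)$ rather than merely in $\DB(X_L)$. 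The main obstacle I anticipate is the block-decomposition step: making the restriction of $(\ad_z^{\mathfrak{e_7}})^2$ to $L_0$ sufficiently explicit that one can read off $\tilde E_{L_0}$ in terms of $E_{L_0}$ requires a careful bookkeeping of the $\frak{e_7}$ bracket in Kimura's coordinates, and it is conceivable that $\tilde E_{L_0}$ is a non-split extension whose $\mathrm{Ext}^1$ needs a short exact sequence argument rather than a mere direct-sum decomposition — though even then the long exact sequence in $\mathrm{Ext}^\bullet$, fed by Theorem \ref{m1} and a twist-vanishing computation for the companion block, should close the gap.
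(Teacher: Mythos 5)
Your overall strategy matches the paper's: Lemma \ref{matfacto2}, the degeneration to a special $L_0 \subset \PP(\bigwedge^3 \CC^6)$, semicontinuity, and the CY-$3$ Serre duality at the end are all the right moves. But there is one genuine gap in the middle, and you have the block structure at $L_0$ slightly wrong.

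First, the block structure. You speculate that $\tilde E_{L_0}$ might be a non-split extension of $E_{L_0}$ by a companion sheaf, to be handled by a long exact sequence. In fact the bigraded decomposition of $\mathfrak{e_7}$ obtained by combining the type $\mathrm{II}$ grading with the type $\mathrm{I}$ decompositions of $\mathfrak{so}_{12}$ and $\Delta$ forces the off-diagonal blocks $K_y, M_y$ of $(\ad^{\mathfrak{e_7}}_y)^2$ to vanish identically for $y$ of bidegree $(1,0)$, i.e. $y \in \bigwedge^3\CC^6$. So $(\ad^{\mathfrak{e_7}}_y)^2$ is honestly block \emph{diagonal}, $\begin{pmatrix} S_y & 0 \\ 0 & -S_y^t \end{pmatrix}$, and $\tilde E_{L_0} = E_{L_0} \oplus G_{L_0}$ is a direct sum, not merely an extension. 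This is good news for the computation, but it introduces the second issue.

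Second, and more seriously: because $\tilde E_{L_0}$ splits as $E_{L_0} \oplus G_{L_0}$, one has $\mathrm{Hom}_{X_{L_0}}(\tilde E_{L_0}, \tilde E_{L_0}) \cong \CC^2$ (after checking, by a further Macaulay$2$ run on $6\times 6$ matrices, that $\mathrm{Hom}(E_{L_0}, G_{L_0}) = \mathrm{Hom}(G_{L_0}, E_{L_0}) = 0$ and $\mathrm{Ext}^1$ between the two blocks also vanishes). So your sentence ``the equalities $\mathrm{Hom} = \CC$, $\mathrm{Ext}^1 = 0$, having been verified at $L_0$, persist on a dense open subset'' is not correct as stated: at $L_0$ you verify $\dim\mathrm{Hom} = 2$, not $1$. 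Semicontinuity then gives only $\dim \mathrm{Hom}_{X_L}(\tilde E_L, \tilde E_L) \leq 2$ for generic $L \subset \PP(\Delta)$, and the required conclusion $\mathrm{Hom}_{X_L}(\tilde E_L, \tilde E_L) = \CC$ needs an additional argument. The paper closes this by observing that for generic $L \subset \PP(\Delta)$ the off-diagonal blocks $K_L, M_L$ of $\tilde B_L$ are non-zero, which forbids a block-diagonal endomorphism $(\lambda I_6, \mu I_6)$ with $\lambda \neq \mu$ from commuting with the matrix factorization; thus only $\CC \cdot I_{12}$ survives. Without this step your argument proves sphericity only up to an unresolved factor of two in $\mathrm{Hom}$, which is exactly the difference between a spherical bundle and a direct sum of two spherical bundles.
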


\begin{proof}
Let $X$ be a generic quartic double fivefold and let $Y$ be the double cover of $\mathbb{P}(\Delta)$ ramified along $\sQ$. By the results of section $2$, we know that $X$ is the fiber product of $Y$ with a generic $L \in \GG(6,\Delta)$.
As a consequence, we have to prove the existence of such a matrix factorization for $X_L = Y \times_{\mathbb{P}(\Delta)} L$, when $L \in \GG(6,\Delta)$ is generic.

\bigskip

Consider the type $\mathrm{II}$ decomposition:

\begin{equation*}
\mathfrak{e_7} = \mathbb{C}^* \oplus \Delta^* \oplus \mathfrak{so}_{12} \oplus \mathbb{C} \oplus \Delta \oplus \mathbb{C}.
\end{equation*}

As $\mathfrak{e_7}$ endowed with this decomposition has maximal rank $4$, we know that:

\begin{equation*}
(\ad_z)^4(X_{-\beta}) = \sP(z).X_{\beta},
\end{equation*}
for all $z \in \Delta$. Since $(\ad_z)^2 \in \mathfrak{so}_{12}^* \simeq \mathfrak{so}_{12} \subset \mathfrak{gl}_{12}$, we can see $(\ad_z)^2$ as a $12*12$ matrix. We have the following :
\begin{lem} \label{matfacto2}
The pair $((\ad_z)^2,(\ad_z)^2)_{z \in \Delta}$ is a matrix factorization of $\sP$. 
\end{lem}

\begin{proof}
The proof is similar to that of Lemma \ref{matfacto}. We check with $\mathrm{Lie}$ \cite{Lie} that the decomposition into $\mathrm{Spin}_{12}$ irreducible representations of $\mathrm{S}^4 \Delta$ is :

\begin{equation*}
\mathrm{S}^4 \Delta = [0,0,0,0,0,4] + [0,1,0,0,0,2] + [0,2,0,0,0,0] + [0,0,0,0,0,2] + [0,0,0,1,0,0] + [0,0,0,0,0,0],
\end{equation*}
 where $[0,0,0,0,0]$ represents $\sP.\mathbb{C}$. Furthermore the decomposition of $\mathrm{End}(\mathbb{C}^{12})$ into $\mathrm{Spin}_{12}$ irreducible representations is:

\begin{equation*}
\mathrm{End}(\mathbb{C}^{12}) = [2,0,0,0,0,0] + [0,1,0,0,0,0] + [0,0,0,0,0,0]
\end{equation*}
where $[0,0,0,0,0,0]$ is $\mathrm{id}_{\mathbb{C}^{12}}.\mathbb{C}$
\end{proof}

The matrix factorization that will be of chief interest for us is closely related to the pair $((\ad_z)^2,(\ad_z)^2)_{z \in \Delta}$. In the following we write $\ad_z^{\mathfrak{g}}$ when we want to specify with which Lie algebra we work.

\bigskip

There is a type $\mathrm{I}$ decomposition:

\begin{equation} \label{equa3}
\mathfrak{so}_{12} = \bigwedge^2 \mathbb{C}^6 \oplus \mathfrak{gl}_6 \oplus \bigwedge^4 \mathbb{C}^6.
\end{equation}

If one chooses the quadratic form on $\mathbb{C}^{12}$ defined on a fixed basis by:

\begin{equation*}
J_{12} = \begin{pmatrix}
 & 0 & 0& \cdots & 0&1 & \\
 & 0 & 0& \cdots & 1&0 & \\
  & \vdots & \vdots& \vdots  & \vdots & \vdots & \\
   & 0 & 1& \cdots & 0&0 & \\
    & 1 & 0& \cdots & 0&0 & \\
\end{pmatrix},
\end{equation*}
then the decomposition of $\mathfrak{so}_{12}$ as in equation (\ref{equa3}) can be restated in the matrix context as follows. Any $P \in \mathfrak{so}_{12}$ can be written:

\begin{equation*}
P = \begin{pmatrix}
 & A & & K & \\
& M & & -A^{t} &\\
\end{pmatrix},
\end{equation*}

where $A \in \mathfrak{gl}_6$, $A^{t}$ is the transpose of $A$ with respect to the anti-diagonal and $K,M$ are skew-symmetric matrices with respect to the anti-diagonal. Hence, for any $z \in \Delta$, we have:

\begin{equation*}
(\ad^{\mathfrak{e_7}}_z)^2 = \begin{pmatrix}
 & A_z & & K_z & \\
& M_z & & -A^{t}_z&\\
\end{pmatrix},
\end{equation*}
where $A_z \in \mathfrak{gl}_6$, $A_z^{t}$ is the transpose of $A_z$ with respect to the anti-diagonal and $K_z,M_z$ are skew-symmetric matrices with respect to the anti-diagonal. Furthermore, we observe that the type $\mathrm{II}$ decomposition:
\begin{equation*}
\mathfrak{e_7} = \mathbb{C}^* \oplus \Delta^* \oplus \mathfrak{so}_{12} \oplus \mathbb{C} \oplus \Delta \oplus \mathbb{C},
\end{equation*}
combined with the type $\mathrm{I}$ decompositions:
\begin{equation*}
\begin{split}
\mathfrak{so}_{12} = \bigwedge^2 \mathbb{C}^6 \oplus \mathfrak{gl}_6 \oplus \bigwedge^4 \mathbb{C}^6 & \hspace{1cm} \Delta = (\mathbb{C}^6)^* \oplus \bigwedge^3 \mathbb{C}^6 \oplus \mathbb{C}^6
\end{split}
\end{equation*}
gives a bigraded decomposition:
\bigskip
\begin{equation*}
\mathfrak{e_7} =  \begin{pmatrix} \mathbb{C}^* &  \oplus  &(\mathbb{C}^6)^* & \oplus & \bigwedge^3 \mathbb{C}^6 & \oplus & \mathbb{C}^6 & \oplus \\
(-2,0) & &(-1,-1) & &(-1,0) & &(-1,1) & \\
& & & & & & & \\
&\bigwedge^2 \mathbb{C}^6 &\oplus& \mathfrak{gl}_6 \oplus \mathbb{C} &\oplus& \bigwedge^4 \mathbb{C}^6 & \oplus & \\
& (0,-1) & &(0,0) & &(0,1) & \\
& & & & & & & \\
 (\mathbb{C}^6)^* & \oplus & \bigwedge^3 \mathbb{C}^6 & \oplus & \mathbb{C}^6 & \oplus & \mathbb{C} \\
(1,-1) & &(1,0) & & (1,1) & &(2,0) \\
\end{pmatrix} 
\end{equation*}
\bigskip

This means that for $z \in \mathfrak{e_7}$ of bidegree $(a,b)$, we have $\ad_z : \mathfrak{e_7}_{(i,k)} \longrightarrow \mathfrak{e_7}_{(i+a,k+b)}$. In particular, if $y \in \bigwedge^3 \mathbb{C}^6$ is of bidegree $(1,0)$, the bigraded decomposition above implies:

\begin{equation*}
(\ad^{\mathfrak{e_7}}_{y})^2 : \bigwedge^2 \mathbb{C}^6 \longrightarrow \{0 \}
\end{equation*}
and
\begin{equation*}
(\ad^{\mathfrak{e_7}}_{y})^2 : \bigwedge^4 \mathbb{C}^6 \longrightarrow \{0 \}.
\end{equation*}
As a consequence, for $y \in \bigwedge^3 \mathbb{C}^6$ of bidegree $(1,0)$, the matrix representation of $(\ad^{\mathfrak{e_7}}_{y})^2$ is:
\begin{equation*}
(\ad^{\mathfrak{e_7}}_{y})^2 = \begin{pmatrix}
 & A_y & & 0 & \\
& 0 & & -A_y^{t} &\\
\end{pmatrix}
\end{equation*}

Furthermore, the above bigraded decomposition of $\mathfrak{e_7}$ can also be obtained (but with swapped bigrading) from the type $\mathrm{I}$ decompositions:
\begin{equation*}
\begin{split}
\frak{e_7} = (V(\omega_6,E_6))^* \oplus \frak{e_6} \oplus \mathbb{C} \oplus V(\omega_6,E_6) & \hspace{1cm}  V(\omega_6,E_6) = (\mathbb{C}^6)^* \oplus \bigwedge^2 \mathbb{C}^6 \oplus \mathbb{C}^6\\
\end{split}
\end{equation*}
and the type $\mathrm{II}$ decomposition:
\begin{equation*}
\frak{e}_6 = \mathbb{C} \oplus (\bigwedge^3 \mathbb{C}^6)^* \oplus \frak{gl}_6 \oplus \bigwedge^3 \mathbb{C}^6 \oplus \mathbb{C}.
\end{equation*}

This establishes that $A_{y}$ is the matrix representation of $(\ad_{y}^{\mathfrak{e}_6})^2$ for any $y \in \bigwedge^{3}\mathbb{C}^6 \subset \Delta$. Hence, thanks to Proposition \ref{identification}, we have for all $y \in \bigwedge^3 \mathbb{C}^6$:

\begin{equation*}
(\ad^{\mathfrak{e_7}}_{y})^2 = \begin{pmatrix}
 & S_y & & 0 & \\
& 0 & & -S_y^{t} &\\
\end{pmatrix}
\end{equation*}
where $S_y$ is the matrix we studied in Section $3.1$.

\bigskip

Let $L$ be a $\mathbb{P}^5$ in $\mathbb{P}(\Delta)$ and denote by $\tilde{B}_L = (\ad^{\mathfrak{e_7}}_{z})^2|_L +ix.I_{12}$ and $\tilde{C}_L = (\ad^{\mathfrak{e_7}}_{z})^2|_L - ix.I_{12}$. We observe that:
\begin{equation*}
\tilde{B}_L \times \tilde{C}_L = \tilde{C}_L \times \tilde{B}_L = (x^2 + \sP^{(L)}(z)).I_{12}.
\end{equation*}
The pair $(\tilde{B}_L,\tilde{C}_L)$ is thus a matrix factorization of the polynomial defining $X_L$ in $\mathbb{P}(1,\ldots,1,2)$. Denote by $\tilde{E}_L$ the cokernel of the restriction to $X_L$ of $\tilde{B}_L$. If $L$ is not included in $\sQ$, then $X_L$ is the degenerating locus of $\tilde{B}_L$. Since $\mathrm{deg}(X_L) = 4$, we find that $\tilde{E}_L$ has generically rank $6$.

\bigskip

Let $L_0$ generic inside $\mathbb{P}(\bigwedge^3 \mathbb{C}^6)$. By the above discussion on the bigraded decomposition of $\mathfrak{e_7}$, we know that:
\begin{equation*}
\tilde{B}_{L_0} = \begin{pmatrix}
 & B_{L_0} & & 0 & \\
& 0 & & -B_{L_0}^{t} &\\
\end{pmatrix}
\end{equation*}
where $B_{L_0}$ is the matrix factorization of $x^2 + \lP^{(L_0)}(z)$ we studied in section $3.1$. As a consequence, $\tilde{E}_{L_0} = E_{L_0} \oplus G_{L_0}$, where $G_{L_0}$ is the restriction to $X_{L_0}$ of the cokernel of $-B_{L_0}^{t}$. One can prove that $G_{L_0} \simeq E_{L_0}^*(2)$, but that won't be useful for the proof. Using a similar Macaulay2 algorithm to the one used for the proof of Theorem \ref{m1}, we find that:

\begin{equation*}
\mathrm{Ext}_{X_{L_0}}^1(E_{L_0}, G_{L_0}) = \mathrm{Ext}_{X_{L_0}}^1(G_{L_0}, E_{L_0}) =  0
\end{equation*}
and 
\begin{equation*}
\mathrm{Hom}_{X_{L_0}}(E_{L_0}, G_{L_0}) = \mathrm{Hom}_{X_{L_0}}(G_{L_0}, E_{L_0}) =  0.
\end{equation*}
By Theorem \ref{m1}, we deduce that:

\begin{equation*}
\mathrm{Ext}_{X_{L_0}}^1(\tilde{E}_{L_0}, \tilde{E}_{L_0}) = 0
\end{equation*}
and

\begin{equation*}
\mathrm{Hom}_{X_{L_0}}(\tilde{E}_{L_0}, \tilde{E}_{L_0}) =  \left\{ 
\begin{pmatrix}
 & \lambda.I_6 & &0 & \\
& 0 & & \mu.I_6 &\\
\end{pmatrix}, \lambda, \mu \in \mathbb{C} \right\}.
\end{equation*}

\bigskip

Let $L \subset \mathbb{P}(\Delta)$ a generic $\mathbb{P}^5$. We will describe the algebra $\mathrm{Ext}^{\bullet}(\tilde{E}_L,\tilde{E}_L)$. Consider $\mathcal{X}$ the family of quartic double fivefolds obtained as a double cover of $\mathbb{P}^5$ ramified along linear sections of $\sQ \subset \mathbb{P}(\Delta)$. We have:

\begin{equation*}
\mathcal{X} = Y \times_{\mathbb{P}(\Delta)} \mathbb{P}(\mathcal{R}),
\end{equation*}
where $\mathbb{P}(\mathcal{R})$ is the projectivization of the tautological bundle over $\GG(6,\Delta)$ and $Y$ is the double cover of $\mathbb{P}(\Delta)$ ramified along $\sQ$. The natural projection $\pi : \mathcal{X} \longrightarrow \GG(6, \Delta)$ is a proper and flat morphism. Let $\tilde{E}$ be the rank $6$ sheaf defined on $Y$ as the cokernel of the restriction to $Y$ of the matrix representing $(\ad^{\mathfrak{e_7}}_{z})^2 + x.I_{12}$, for $z \in \mathbb{P}(\Delta)$ and $x$ of degree $2$. We denote by $\tilde{\E}$ the pull-back of $\tilde{E}$ to $\X$. We observe that for any $L \in \GG(6, \Delta)$ which intersects $\sQ$ properly, we have:

\begin{equation*}
\tilde{\E}|_{\pi^{-1}(L)} =  \tilde{E}_L.
\end{equation*}

By the discussion above, we know that:

\begin{equation*}
\mathrm{Ext}_{X_{L_0}}^1(\tilde{E}_{L_0}, \tilde{E}_{L_0}) = 0
\end{equation*}
and

\begin{equation*}
\mathrm{Hom}_{X_{L_0}}(\tilde{E}_{L_0}, \tilde{E}_{L_0}) =  \left\{ 
\begin{pmatrix}
 & \lambda.I_6 & &0 & \\
& 0 & & \mu.I_6 &\\
\end{pmatrix}, \lambda, \mu \in \mathbb{C} \right\},
\end{equation*}
for generic $L_0 \in \GG(6,\bigwedge^3 \mathbb{C}^6)$. Since the morphism $\pi : \X \longrightarrow \GG(6,\Delta)$ is proper and flat and the base $\GG(6,\Delta)$ is smooth, the semi-continuity Theorem implies that:

\begin{equation*}
\mathrm{Ext}_{X_{L}}^1(\tilde{E}_{L}, \tilde{E}_{L}) = 0
\end{equation*}
and
\begin{equation*}
\mathrm{Hom}_{X_{L}}(\tilde{E}_{L}, \tilde{E}_{L}) \subset \left\{ 
\begin{pmatrix}
 & \lambda.I_6 & &0 & \\
& 0 & & \mu.I_6 &\\
\end{pmatrix}, \lambda, \mu \in \mathbb{C} \right\}.
\end{equation*}

Furthermore, for generic $L$, we have:
\begin{equation*}
\tilde{B}_{L} = \begin{pmatrix}
 & A_L & & K_L & \\
& M_L & & -A^{t}_L &\\
\end{pmatrix},
\end{equation*}
with $K_L,M_L \neq 0$. We deduce that $\mathrm{Hom}_{X_{L}}(\tilde{E}_{L}, \tilde{E}_{L}) = \mathbb{C}.I_{12}$. Finally, for generic $L \in \GG(6,\Delta)$, the section $L \cap \sQ$ is smooth and by \cite{kuz1}, we have a decomposition:

\begin{equation*}
\DB(X_L) = \langle \A_{X_L}, \OO_{X_L}, \OO_{X_L}(1), \OO_{X_L}(2), \OO_{X_L}(3) \rangle,
\end{equation*}
where $\A_{X_L}$ is a $CY$-3 category. Using the resolution:

\begin{equation*}
0 \longrightarrow \mathbb{C}^{12} \otimes \OO_{\mathbb{P}(1,\ldots,1,2)}(-2) \longrightarrow \mathbb{C}^{12} \otimes \OO_{\mathbb{P}(1,\ldots,1,2)} \longrightarrow i_*(\tilde{E}_L) \longrightarrow 0,
\end{equation*}
we easily prove that $\tilde{E}_L(-1) \in \A_{X_L}$ and $\tilde{E}_L(-2) \in \A_{X_L}$. Since $\A_{X_L}$ is a CY-$3$ category, we deduce that $\mathrm{Ext}^3(\tilde{E}_L,\tilde{E}_L) = \mathrm{Hom}(\tilde{E}_L,\tilde{E}_L)^* = \mathbb{C}$ and that $\mathrm{Ext}^{2}(\tilde{E}_L,\tilde{E}_L) = \mathrm{Ext}^1(\tilde{E}_L,\tilde{E}_L) = 0$. This establishes that $\tilde{E_L}(-1)$ and $\tilde{E}_{L}(-2)$ are spherical rank $6$ vector bundles contained in $\A_{X_L}$ and finishes the proof of Theorem \ref{mainmain}.

\end{proof}

The following is analogous to remark \ref{remG36}.
\begin{rem}
It is certainly worth noting that the cokernel of the matrix factorization $\left((\ad^{\mathfrak{e_7}}_z)^2,(\ad^{\mathfrak{e_7}}_z)^2 \right)$ is a rank $6$ coherent sheaf living on $\sQ \in \mathbb{P}(\Delta)$. We have a diagram:

 \begin{equation*}
\xymatrix{ & &  \ar[lldd]_{q} I_{\mathbb{S}_ {12^*}/\mathbb{P}(\Delta^*)} \ar[rrdd]^{p} & &  \\
& & & & \\
\mathbb{S}_{12^*} & & & & \sQ}
\end{equation*}
where $\mathbb{S}_{12^*} \subset \mathbb{P}(\Delta^*)$ is one of the connected component of the orthogonal Grassmannian $\mathrm{OG}(6,12^*)$ in the (dual) spinor embedding and $I_{\mathbb{S}_ {12^*}/\mathbb{P}(\Delta^*)}$ is the projectivization of the conormal bundle of $\mathbb{S}_{12}$ in $\mathbb{P}(\Delta^*)$. It is very likely that this rank $6$ coherent sheaf is $p_* q^* \mathcal{Q}(m)|_{L}$, where $\mathcal{Q}(m)$ is an appropriate twist of the quotient bundle on $\mathbb{S}_{12^*}$.

\end{rem}

\subsection{Homological unit of the CY-$3$ category associated to a generic quartic double fivefold}

In \cite{homounit}, the concept of \textit{homological unit} was introduced for a large class of triangulated categories as a replacement for the algebra $H^{\bullet}(\OO_X)$ when the category under study is not (necessarily) the derived category of a projective variety. This notion has been further explored in \cite{krug}, where intriguing examples of units have been constructed and in \cite{HKC}, where it was used to define hyper-K\"ahler categories. We give a quick reminder on the definition and basic properties of the homological units. 

\begin{defi}
Let $\C$ be an abelian category and $\varphi : \C \longrightarrow \mathbb{N}$ be a function. We say that $\varphi$ is a \textbf{rank function} if it is additive with respect to exact sequences in $\C$. We say that $\varphi$ is trivial if its image is $\{ 0 \}$.
\end{defi}

We provide the definition of \textit{homological units} for an abelian category endowed with a rank function.

\begin{defi} \label{homounit}
Let $\C$ be an abelian category with enough injectives and with a non-trivial rank function. Let $\T$ be a full admissible subcategory in $\DB(\C)$. A graded algebra $\mathfrak{T}^{\bullet}$ is called a \textbf{homological unit} for $\T$ (with respect to $\C$), if $\mathfrak{T}^{\bullet}$ is maximal for the following properties : 
\begin{enumerate}

\item for any object $\F \in \T$, there exists a pair of morphisms $i_{\F} : \mathfrak{T}^{\bullet} \rightarrow  \HHH^{\bullet}(\F,\F)$ and $t_{\F} : \HHH^{\bullet}(\F, \F) \rightarrow \mathfrak{T}^{\bullet}$ with the properties:
 \begin{itemize}

 \item the morphism $i_{\F} : \mathfrak{T}^{\bullet} \rightarrow  \HHH^{\bullet}(\F,\F)$ is a graded algebra morphism which is functorial in the following sense. Let $\F, \G \in \T$ and let $a \in \mathfrak{T}^{k}$ for some $k$. Then, for any morphism $\psi : \F \rightarrow \G$, there is a commutative diagram:
\begin{equation*}
\xymatrix{ \F \ar[rr]^{i_{\F}} \ar[dd]^{\psi} & &\F [k] \ar[dd]^{\psi[k]} \\
& &  \\
\G \ar[rr]^{i_{\G}} & & \G [k]}
\end{equation*}

\item the morphism $t_{\F} : : \HHH^{\bullet}(\F, \F) \rightarrow \mathfrak{T}^{\bullet}$ is a graded vector spaces morphism which satisfies the dual functoriality property of $i_{\F} $.
\end{itemize}

\item for any $\F \in \T$ which rank (seen as an object in $\DB(\C)$) is not vanishing, the morphism $t_{\F}$ splits $i_{\F}$ as a morphism of graded vector spaces.

\end{enumerate}

With hypotheses as above, an object $\F \in \T$ is said to be \textup{unitary}, if $\HHH^{\bullet}(\F,\F) = \mathfrak{T}^{\bullet}$, where $\mathfrak{T}^{\bullet}$ is a homological unit for $\T$.

\end{defi}

We notice the following facts (see \cite{homounit}):
\begin{enumerate}
\item If $\T$ contains a unitary object whose rank is not zero with respect to the chosen rank function on $\DB(\C)$, then the homological unit with respect to this rank function is necessarily unique. This follows from the maximality condition imposed in definition \ref{homounit}.
\item Let $\T$ be an admissible subcategory of $\DB(Coh(X)^{\mathrm{G}})$ where $X$ is a smooth projective variety and $\mathrm{G}$ a reductive group acting linearly on $X$. Assume that there exists a $\mathrm{G}$-equivariant line bundle $L \in \mathrm{Pic}(X)$ such that $L \in \T$. Then, the homological unit of $\T$ with respect to the natural rank function is $H^{\bullet}(\OO_X)^{\mathrm{G}}$.
\item Let $X$ and $Y$ be smooth projective varieties of dimension less or equal to $4$ such that $\DB(X) \simeq \DB(Y)$. It is proved in \cite{homounit} that the algebras $H^{\bullet}(\OO_X)$ and $H^{\bullet}(\OO_Y)$ are isomorphic. This suggests that the homological unit of a triangulated category of geometric origin could be independent of the embedding into the derived category of a smooth projective variety (at least if the dimensions of the varieties are small enough).

\end{enumerate}

In case $\T$ contains a spherical object whose rank is non-zero, the homological unit is easily computed:

\begin{lem}
Let $\T \subset \DB(X)$ be an admissible subcategory of the derived category of a smooth projective variety. Assume that $\T$ is a CY-$n$ category and that it contains a spherical object whose rank is non-zero. Then, the homological unit of $\T$ (with respect to the natural rank function coming from $\DB(X)$) is $\mathbb{C} \oplus \mathbb{C}[n]$.
\end{lem}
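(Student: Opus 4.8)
The plan is to exploit the strong constraints imposed on $\HHH^\bullet(\F,\F)$ for $\F$ a spherical object sitting inside a $CY$-$n$ category, and then feed the outcome through the uniqueness and maximality machinery of Definition \ref{homounit}. First I would recall that, by definition, a spherical object $\F \in \T$ satisfies $\HHH^\bullet(\F,\F) = \mathbb{C} \oplus \mathbb{C}[n]$ as a graded algebra (this is automatic once $\T$ is $CY$-$n$, since the Serre duality pairing forces $\Hom(\F,\F)^* \simeq \Hom(\F,\F[n])$ and the Calabi-Yau condition kills the intermediate degrees; in fact in the situation of Theorem \ref{mainmain} this is exactly how the sphericity of $\tilde{E}_L(-1)$ was verified). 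The algebra structure is the trivial one: $\mathbb{C}[n]$ is in odd total degree if $n$ is odd, so the square of the degree-$n$ generator lands in degree $2n$, which vanishes, and in any case there is nothing in degree $2n$ for it to be.

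Next I would show that $\mathbb{C} \oplus \mathbb{C}[n]$ satisfies the two defining properties of a homological unit for $\T$. For property $1$, given any $\F \in \T$ one takes $i_\F : \mathbb{C} \oplus \mathbb{C}[n] \to \HHH^\bullet(\F,\F)$ to send $1$ to $\mathrm{id}_\F$ and the degree-$n$ generator to a chosen generator of $\Hom(\F,\F[n]) \simeq \Hom(\F,\F)^* \simeq \mathbb{C}$ — say the one Serre-dual to $\mathrm{id}_\F$; this is a graded algebra morphism (the product of the two degree-$n$ elements vanishes for degree reasons, matching the vanishing of $\HHH^{2n}$). Functoriality in $\F$: for $\psi : \F \to \G$, the degree-$0$ component is just composition with identities, which is trivially compatible, and the degree-$n$ component is compatible because the Serre-dual-of-identity class is natural — this is a formal consequence of the naturality of the Serre pairing, i.e. $\psi \circ (\text{Serre dual of }\mathrm{id}_\F) = (\text{Serre dual of }\mathrm{id}_\G) \circ \psi$ in the $CY$-$n$ category up to the scalar matching the trace of $\psi$ against itself, which for the normalization we need is handled by choosing $t_\F$ appropriately. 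For $t_\F$, one uses the Serre-duality-induced splitting: $\HHH^\bullet(\F,\F)$ in degrees $0$ and $n$ both contain a distinguished line (spanned by $\mathrm{id}_\F$ and its Serre dual), and projecting onto these lines gives the retraction $t_\F$ of graded vector spaces, which then automatically splits $i_\F$ whenever $\mathrm{rank}(\F) \neq 0$ (the rank-nonvanishing is what guarantees $\Hom(\F,\F) \neq 0$, so $\mathrm{id}_\F \neq 0$ and the degree-$0$ component of $i_\F$ is injective; the degree-$n$ piece is handled dually).

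Then I would invoke maximality: $\mathbb{C} \oplus \mathbb{C}[n]$ is maximal among graded algebras admitting such compatible families $(i_\F, t_\F)$, because the hypothesis furnishes a specific object, namely the spherical object $\F_0$ with non-zero rank, for which $\HHH^\bullet(\F_0,\F_0) = \mathbb{C} \oplus \mathbb{C}[n]$; any larger $\mathfrak{T}^\bullet$ would have to inject into $\HHH^\bullet(\F_0,\F_0)$ via the algebra morphism $i_{\F_0}$ split by $t_{\F_0}$, forcing $\mathfrak{T}^\bullet = \mathbb{C} \oplus \mathbb{C}[n]$. Finally, by fact $1$ recorded just before the lemma (uniqueness of the homological unit once $\T$ contains a non-zero-rank unitary object), this $\mathfrak{T}^\bullet$ is the homological unit, and $\F_0$ is by construction unitary. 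The main obstacle I anticipate is the bookkeeping in verifying the functoriality square for the degree-$n$ generator and checking that $i_\F$ really is an \emph{algebra} (not just graded-vector-space) morphism for all $\F$ simultaneously with a uniform choice of generator — this is where one must be careful that the Serre pairing normalizations are consistent across the whole category, and where one uses that $\T$ being $CY$-$n$ means the Serre functor is literally the shift $[n]$, not just isomorphic to it up to a twist. Everything else is formal.
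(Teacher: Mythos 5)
Your proposal is correct and follows essentially the same route as the paper's (very terse) proof: use Serre duality in the CY-$n$ category to construct the functorial pair $i_{\F}$, $t_{\F}$ for objects of non-zero rank, then note that the spherical object already has Ext-algebra $\mathbb{C}\oplus\mathbb{C}[n]$, which forces the homological unit to be exactly that by maximality. One small misstatement worth flagging: the parenthetical claim that $\HHH^\bullet(\F,\F) = \mathbb{C}\oplus\mathbb{C}[n]$ is ``automatic once $\T$ is CY-$n$'' is not accurate --- the CY-$n$ condition gives Serre duality between degrees $k$ and $n-k$ but does not kill the intermediate Ext's; that vanishing is precisely what sphericality, a genuine additional hypothesis, supplies, and you state this correctly at the start of the sentence, so this is a wording slip rather than a gap.
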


\begin{proof}
Since $\T$ is a CY-$n$ category, Serre duality shows the algebra $\mathbb{C} \oplus \mathbb{C}[n]$ embeds functorially via the trace map into the algebra $\mathrm{Ext}^{\bullet}(\F,\F)$, for any $\F \in \T$ whose rank is non-zero. Furthermore, the category $\T$ contains an object whose rank is non-zero and whose Ext-algebra is $\mathbb{C} \oplus \mathbb{C}[n]$. We deduce that the homological unit of $\T$ is $\mathbb{C} \oplus \mathbb{C}[n]$.
\end{proof}

From this lemma and Theorem \ref{mainmain}, we deduce the:

\begin{cor}
Let $X$ be a generic quartic double fivefold and let $\A_{X}$ the CY-$3$ category associated to the derived category of this fivefold. The homological unit of $\A_X$ is $\mathbb{C} \oplus \mathbb{C}[3]$.
\end{cor}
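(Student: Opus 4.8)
The plan is to apply the preceding Lemma directly, so the work consists entirely in checking that its three hypotheses are met for $\T = \A_X$ with $n = 3$. First I would recall that, for a generic quartic double fivefold $X$, Kuznetsov's semi-orthogonal decomposition (from \cite{kuz1}, exactly as recorded in the proof of Theorem \ref{mainmain}, where $L \cap \sQ$ is smooth for generic $L \in \GG(6,\Delta)$) realizes $\A_X$ as a full admissible subcategory of $\DB(X)$, and that $\A_X$ is a Calabi--Yau category of dimension $3$. Thus the admissibility and CY-$3$ requirements of the Lemma hold, with the rank function taken to be the natural one on $\DB(X)$ restricted to $\A_X$.

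Next I would invoke Theorem \ref{mainmain}: the generic quartic double fivefold carries a spherical rank $6$ vector bundle lying in $\A_X$, concretely the twist $\tilde{E}_L(-1)$ (equivalently $\tilde{E}_L(-2)$) of the cokernel of the matrix factorization $\tilde{B}_L$ coming from the type $\mathrm{II}$ decomposition of $\mathfrak{e_7}$. Its rank, namely $6$, is non-zero, so this is precisely a spherical object of non-zero rank in $\A_X$ in the sense demanded by the Lemma.

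With these two inputs the Lemma applies verbatim: Serre duality in the CY-$3$ category $\A_X$ embeds $\mathbb{C} \oplus \mathbb{C}[3]$ functorially (via the trace) into $\mathrm{Ext}^{\bullet}(\F,\F)$ for every $\F \in \A_X$ of non-zero rank, and the spherical bundle of Theorem \ref{mainmain} realizes $\mathbb{C} \oplus \mathbb{C}[3]$ itself as such an $\mathrm{Ext}$-algebra. Hence the homological unit of $\A_X$ with respect to the rank function inherited from $\DB(X)$ equals $\mathbb{C} \oplus \mathbb{C}[3]$, which is the claim; uniqueness of the homological unit in the presence of a unitary object of non-zero rank (fact (1) after Definition \ref{homounit}) makes this unambiguous.

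There is no genuine obstacle here — the corollary is a formal consequence of the two cited results — but the one point worth a line of care is bookkeeping about rank functions: "spherical object of non-zero rank" must be read with respect to the same rank function used to define the homological unit. Since the bundle produced in Theorem \ref{mainmain} has rank $6 \neq 0$ as an object of $\DB(X)$, and $\A_X$ inherits exactly this rank function, the hypotheses match on the nose and the argument goes through without further adjustment.
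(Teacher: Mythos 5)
Your proposal is exactly the paper's argument: the corollary is stated as an immediate consequence of the preceding lemma together with Theorem \ref{mainmain}, which furnishes the spherical rank $6$ vector bundle of non-zero rank in $\A_X$. The only thing you add is the (correct) explicit bookkeeping that the rank function on $\A_X$ is inherited from $\DB(X)$, which is implicit in the paper.
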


This contrasts sharply with the fact that $H^{\bullet}(\OO_X) = \mathbb{C}$, when $X$ is a quartic double fivefold. This corollary also suggests a natural question on the homological units of the CY-$3$ categories potentially associated to a manifolds of Calabi-Yau type, namely:

\begin{quest}
Let $X$ be a manifold of Calabi-Yau type. Assume that a semi-orthogonal component of the derived category of $X$ is a CY-$3$ category. Is the homological unit of this CY-$3$ category $\mathbb{C} \oplus \mathbb{C}[3]$?
\end{quest}

It is proved in \cite{maniliev}, that the CY-$3$ category associated to the derived category of a generic cubic sevenfold contains a spherical rank $9$ vector bundle. Hence, by the above lemma, the homological unit of this CY-$3$ category is $\mathbb{C} \oplus \mathbb{C}[3]$. Thus, as far as manifolds of Calabi-Yau type obtained as generic complete intersections of dimension bigger than $4$ in weighted projective spaces are concerned, in order to answer the above question we only need to prove that the CY-$3$ category associated to the transverse intersection of a generic cubic and a generic quadric in $\mathbb{P}^7$ contains a spherical vector bundle.

\end{section}

\newpage

\bibliographystyle{alpha}

\bibliography{dqf}
\newpage

\appendix



\end{document}